\newtheorem*{theorem*}{Theorem}
\newtheorem{theorem} {Theorem}[section]
\newtheorem{corollary}[theorem]{Corollary}
\newtheorem{lemma}[theorem]{Lemma}
\newtheorem{proposition}[theorem]{Proposition}
\newtheorem{remark}[theorem]{Remark}
\newtheorem{definition}[theorem]{Definition}
\newcommand{\conv}  {\operatorname{conv} }
\newcommand{\vol}  {\operatorname{vol} }
\newcommand{\wert} {\operatorname{vert}}
\newcommand{\Z}{\mathbb{Z}}
\newcommand{\R}{\mathbb{R}}
\newcommand{\N}{\mathbb{N}}
\begin{document}

\setcounter{page}{1}

\title{Lattice $3$-polytopes with few lattice points}
\author{M\'onica Blanco}
\author{Francisco Santos}

\thanks{Supported by grants MTM2011-22792 (both authors); BES-2012-058920 of the Spanish Ministry of Science and the European Science Foundation within the ACAT Project (M.~Blanco); Alexander von Humboldt Foundation (F. Santos)}

\begin{abstract}
We extend White's classification of empty tetrahedra to the complete classification of lattice $3$-polytopes with five lattice points, showing that, apart from infinitely many of width one, there are exactly nine equivalence classes of them with width two and none of larger width.
We also prove that, for each $n\in \N$, there is only a finite number of (classes of) lattice $3$-polytopes with $n$ lattice points and of width larger than one. This implies that extending the present classification to larger sizes makes sense, which is the topic of subsequent papers of ours.
\end{abstract}

\keywords{Lattice polytopes, unimodular equivalence, lattice points, finiteness.}
\subjclass[2000]{52B10, 52B20}
\maketitle

\setcounter{tocdepth}{1}
\tableofcontents

%!TEX root =articulo5.tex

\section{Introduction}
\label{sec:introduction}

A \emph{lattice $d$-polytope} is the convex hull of a finite set of points in $\Z^d$ (or in a $d$-dimensional lattice) containing $d+1$ affinely independent points. We call $\#(P\cap \Z^d)$ the \emph{size} of $P$.
Two lattice polytopes $P$ and $Q$ are said \emph{$\Z$-equivalent} or \emph{unimodularly equivalent} if there is an affine map $t:\R^d\to \R^d$ with $t(\Z^d)=\Z^d$ and $t(P)=Q$. 

Lattice $3$-polytopes of the smallest possible size are \emph{empty tetrahedra}, classified by White some 50 years ago (see Theorem~\ref{thm:empty_tetrahedra}).
Our main result is the next case:

\begin{theorem}%[Theorems~\ref{thm:width1_description},~\ref{thm:(3,1)thm},~\ref{thm:(4,1)distinct-volumes},~\ref{thm:vvvv-4v}]
\label{thm:5points}
Every lattice $3$-polytope of size $5$ is $\Z$-equivalent to one listed in Table~\ref{table:5points}. The table is irredundant: polytopes in different rows, or polytopes obtained for different choices of parameters within each row, are not $\Z$-equivalent.

In particular, apart from infinitely many of width one, there are exactly nine (classes of) $3$-polytopes of size $5$ and width two, and none of larger width.% Eight have signature $(4,1)$ and one has signature $(3,1)$.
\end{theorem}

Table~\ref{table:5points} includes, apart from the lattice points in a representative for each class, the following invariants of the class (more details about them are in Section~\ref{sec:volumevector}):
\begin{itemize}
\item 
Let $f:\R^d\to \R$ be an affine functional such that $f(\Z^d)\subset \Z$. The integer $\max_{x\in P}f(x) -\min_{x\in P} f(x)$ is called the \emph{width of $P$ with respect to $f$}. The minimum width among all possible (non-constant) choices of $f$ is the \emph{width of $P$}. Hence, $P$ has \emph{width one} if its vertices lie in two consecutive parallel hyperplanes of the lattice.

\item Remember that a set $A$ of $d+2$ points affinely spanning $\R^d$ have a unique (modulo a scalar factor) affine dependence. The \emph{signature} of a $d$-polytope of size $d+2$ is $(i,j)$ if this dependence has $i$ positive  and $j$ negative coefficients. Signatures $(i,j)$ and $(j,i)$ are the same, and the five possible signatures of five points in $\R^3$ are $(4,1)$, $(3,2)$, $(2,2)$, $(3,1)$ and $(2,1)$. 

\item The \emph{volume vector} of a $3$-polytope of size five is a vector in $\Z^5$ recording the volumes of the (perhaps degenerate) tetrahedra spanned by each subset of four of the five points.  All volumes in this paper are ``normalized to the lattice'': the volume of the convex hull of an affine lattice basis equals one, and the volume of every lattice polytope is an integer. 
%For example, the normalized volume of $d+1$ points $p_1,\dots,p_{d+1}$ we have:
% \[
%\vol(\conv\{p_1, \dots, p_{d+1}\}):=\left |\det \left( \begin{array}{ccc}
%1 &\dots&1\\
%p_1 &\dots&p_{d+1}\\
%\end{array}
% \right)
% \right |.
%\]
We give volume vectors a sign convention that makes them have as many positive and negative entries as given by the signature.
\end{itemize}

\begin{table}[h]
\footnotesize 
\begin{tabular}{|c|c|c|c|}
\hline
\textbf{Sign.} &\textbf{Volume vector} & \textbf{ Width} & \textbf{Representative} \\
\hline
$(2,2)$&  $(-1,1,1,-1,0)$ &$1$ & $(0,0,0)$, $(1,0,0)$, $(0,1,0)$, $(1,1,0)$,$(0,0,1)$\\
  \hline
$(2,1)$&  \begin{tabular}{c}
    $(-2q,q,0,q,0)$  \\
    $0\le p\le \frac{q}{2}$,\\ $\gcd(p,q)=1$ 
  \end{tabular}
  & $1$ & $(0,0,0)$, $(1,0,0)$, $(0,0,1)$, $(-1,0,0)$,$(p,q,1)$\\
  \hline
$(3,2)$&  \begin{tabular}{c}
    $(-a-b,a,b,1,-1)$ \\
    $0<a\le b$, \\$\gcd(a,b)=1$
  \end{tabular}
  & $1$  & $(0,0,0)$, $(1,0,0)$, $(0,1,0)$, $(0,0,1)$,$(a,b,1)$\\
  \hline
$(3,1)$&  \begin{tabular}{c}
    $(-3,1,1,1,0)$  \\
    $(-9,3,3,3,0)$
  \end{tabular} &   \begin{tabular}{c}
    $1$ \\ $2$  \end{tabular}  &\begin{tabular}{l}
$(0,0,0)$, $(1,0,0)$, $(0,1,0)$, $(-1,-1,0)$,$(0,0,1)$\\
$(0,0,0)$, $(1,0,0)$, $(0,1,0)$, $(-1,-1,0)$,$(1,2,3)$
  \end{tabular}\\
  \hline
&  $(-4,1,1,1,1)$ & $2$ & $(0,0,0)$, $(1,0,0)$, $(0,0,1)$, $(1,1,1)$,$(-2,-1,-2)$\\
& $(-5,1,1,1,2)$ & $2$ & $(0,0,0)$, $(1,0,0)$, $(0,0,1)$, $(1,2,1)$,$(-1,-1,-1)$\\
& $(-7,1,1,2,3)$ & $2$ & $(0,0,0)$, $(1,0,0)$, $(0,0,1)$, $(1,3,1)$,$(-1,-2,-1)$\\
$(4,1)$&  $(-11,1,3,2,5)$ &$2$ & $(0,0,0)$, $(1,0,0)$, $(0,0,1)$, $(2,5,1)$,$(-1,-2,-1)$\\
&  $(-13,3,4,1,5)$ &  $2$ & $(0,0,0)$, $(1,0,0)$, $(0,0,1)$, $(2,5,1)$,$(-1,-1,-1)$\\
&  $(-17,3,5,2,7)$ &  $2$ & $(0,0,0)$, $(1,0,0)$, $(0,0,1)$, $(2,7,1)$,$(-1,-2,-1)$\\
&  $(-19,5,4,3,7)$ &  $2$ & $(0,0,0)$, $(1,0,0)$, $(0,0,1)$, $(3,7,1)$,$(-2,-3,-1)$\\
&  $(-20,5,5,5,5)$ &  $2$ & $(0,0,0)$, $(1,0,0)$, $(0,0,1)$, $(2,5,1)$,$(-3,-5,-2)$\\
\hline
\end{tabular}
\caption{{Complete classification of lattice $3$-polytopes of size $5$.}}
\label{table:5points}
\end{table}

After some preliminaries on signatures, volume vectors, and empty tetrahedra that we put together in Section~\ref{sec:preliminaries}, we devote Section~\ref{sec:structure} to proving the following structural result for $3$-polytopes of size five.
\begin{theorem}[Theorems~\ref{thm:width1} and~\ref{thm:width2}.]
\label{thm:structure}
Let $P$ be a lattice $3$-polytope of size $5$.
\begin{enumerate}
\item If $P$ has signature $(2,2)$, $(2,1)$ or $(3,2)$, then it has width one.
\item If $P$ has signature $(3,1)$ or $(4,1)$, then there exists an affine integer functional with values $(1,1,0,0,h)$ in the lattice points of $P$, where $h\in\{-1,-2\}$.
\end{enumerate}
\end{theorem}

Once we have this, the proof of Theorem~\ref{thm:5points} goes as follows: 
Polytopes of width one consist of two subconfigurations of sizes $n_1$ and $n_2$ ($n_1+n_2=5$) placed on consecutive parallel  lattice planes. 
The possibilities for the individual subconfigurations are few and easy to find out, so the only complication lies in the possible ``rotations'' (by which we mean elements of $SL(2,\Z)$) of one with respect to the other. We work out the complete and irredundant list of possibilities in Section~\ref{sec:width1}.
For the rest of polytopes of size five, Theorem~\ref{thm:structure} allows for a similar treatment except the subconfigurations (of sizes two, two, and one) now lie in three, instead of two, parallel hyperplanes. This complicates matters, but we still obtain their full classification through a case by case study in Sections~\ref{sec:3-1}, \ref{sec:4-1-nonsymmetric} and \ref{sec:4-1-symmetric}.

\medskip

Let us mention that other approaches to the classification of lattice $3$-polytopes, some overlapping with ours, have been  undertaken:
\begin{itemize}
\item 
Polytopes of signatures $(2,2)$ and $(3,2)$ have width 1 by the following result of Howe~\cite[Thm.~1.3]{Scarf}: Every lattice $3$-polytope with no lattice points other than its vertices has width 1. However, the classification of them included in Table~\ref{table:5points} is, as far as we know, new.

\item 
Polytopes of signature $(4,1)$, which are the same as ``terminal tetrahedra'' or ``clean tetrahedra with a single interior point'', were classified by Kasprzyk~\cite{Kasprzyk} and Reznick~\protect{\cite[Thm.~7]{Reznick-clean}}, who obtained  the same list as ours. In this sense, Sections~\ref{sec:4-1-nonsymmetric} and~\ref{sec:4-1-symmetric} are only reworking the known classifications. Still, we prefer to include them for completeness and because our methods differ from the ones in those papers.

\item After the first version of our paper was available, Averkov et al.~\cite{AverkovKrumpelmannWeltge} proved that the complete list of maximal hollow $3$-polytopes consists only of the $12$ found previously in~\cite{AverkovWagnerWeismantel}. A priori, one could find all $3$-polytopes of size $5$ and of signature different from $(4,1)$ by an exhaustive search among the subpolytopes of these twelve, together with those that project to the second dilation of a unimodular triangle.
\end{itemize}

\medskip

That there is an infinite number of $\Z$-equivalence classes of lattice $3$-polytopes for every size $n \ge 4$ has been previously observed (e.g. in~\cite{LiuZong}). This contrasts with the situation in dimension two, where Pick's Theorem easily implies finitely many classes for each fixed size. Still, both our Theorem~\ref{thm:5points} and White's classical classification of $3$-polytopes of size four (see Theorem~\ref{thm:empty_tetrahedra}) seem to indicate that this infiniteness happens only in width one. In Section~\ref{sec:finiteness} we prove this for every $n$:

\begin{theorem}[Corollary~\ref{coro:finitewidth>1}]
\label{thm:finitewidth>1}
For each $n\ge 4$, there exist finitely many lattice $3$-polytopes of width greater than one and size $n$.
\end{theorem}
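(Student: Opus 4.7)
My plan is to reduce the statement to an $n$-dependent bound on volumes. It is standard that lattice $3$-polytopes of given size $n$ and bounded volume $V$ form finitely many $\Z$-equivalence classes: up to equivalence one may confine $P$ inside a cube of side $O(V)$, in which there are only finitely many $n$-element lattice configurations. So it suffices to prove that some function $V(n)$ bounds $\vol(P)$ for every lattice $3$-polytope $P$ of size $n$ and width larger than one.

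The case $n=4$ is immediate: by Theorem~\ref{thm:empty_tetrahedra} (White's classification), every empty tetrahedron has width one, so the class to be bounded is empty.

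For $n\geq 5$, the natural attack is to slice $P$. Pick a primitive integer linear form $f$ realizing $w=\operatorname{width}(P)\geq 2$, and set $P_i:=P\cap\{f=i\}$ for $i=0,\dots,w$. Each $P_i$ is a lattice $2$-polygon (possibly lower-dimensional) in a lattice plane, with $\sum_i \#(P_i\cap \Z^3)=n$. Since the vertices of $P$ are lattice points and therefore lie on the lattice hyperplanes $\{f=i\}$, a short argument gives $P=\bigcup_{i=0}^{w-1}\conv(P_i\cup P_{i+1})$. Additivity of volume, the prismatoidal formula, and Pick's bound on each $\operatorname{area}(P_i)$ then reduce the task to bounding each mixed area $V(P_i,P_{i+1})$, which is equivalent to bounding how spread out the slices can be once identified in a common $2$-plane.

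The main obstacle is precisely this spread: two lattice polygons with a bounded number of lattice points can, in isolation, be translated arbitrarily far apart in a common plane, producing unbounded mixed area. The way out must invoke the hypothesis that width is $\geq 2$ in \emph{every} direction, not only in $f$: the projection $\pi(P)$ along $f$ is a $2$-dimensional lattice polygon of width $\geq 2$ in every direction, whose vertices are images of vertices of $P$ (hence at most $n$ in number) and which contains every $\pi(P_i)$. A careful analysis of such projections --- likely splitting cases according to the combinatorial types of the end slices $P_0$ and $P_w$ (a point, a segment, or a genuine polygon) --- should give a polynomial bound on the diameter of $\pi(P)$ in terms of $n$, and therefore the sought polynomial bound on $\vol(P)$. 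This control of the projection is the step I expect to contain the main technical work of the proof.
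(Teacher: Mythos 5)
Your overall strategy (reduce to an $n$-dependent volume bound, then invoke finiteness of $\Z$-equivalence classes of bounded volume, which is the Lagarias--Ziegler theorem the paper quotes) is sound in outline, but the proposal does not actually establish the volume bound, and the two places where you defer the work are exactly where all the difficulty sits. First, Pick's theorem does not bound $\operatorname{area}(P_i)$ by $\#(P_i\cap\Z^3)$: the intermediate slices $P_i=P\cap\{f=i\}$ are rational polygons whose vertices are in general not lattice points (they are the points where edges of $P$ cross the plane), and a rational polygon with few or even no lattice points can have arbitrarily large area. Second, and more fundamentally, the claim that the projection $\pi(P)$ has diameter polynomially bounded in $n$ is only asserted (``should give'', ``I expect''); knowing that $\pi(P)$ has at most $n$ vertices and width $\ge 2$ in every direction gives nothing, since the dilated triangle $N\Delta$ has three vertices, width $N$ in all directions, and arbitrarily large area, and lattice points of $\pi(P)$ need not lift to lattice points of $P$, so the hypothesis ``size $n$'' does not obviously transfer to the projection. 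You would also need to bound the width $w$ itself in terms of $n$ before summing over the $w$ slabs, which you do not address. In short, the proposal reduces the theorem to a statement essentially as hard as the theorem itself.

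The paper's proof is a complete argument resting on three quoted results. If $P$ is not hollow, Hensley's theorem bounds $\vol(P)$ in terms of its (at most $n$) interior lattice points, and Lagarias--Ziegler then gives finiteness. If $P$ is hollow and admits no lattice projection onto a hollow polygon, it belongs to the finite list of the Nill--Ziegler theorem. If $P$ is hollow and projects onto a hollow polygon, that polygon either has width one (forcing $P$ to have width one, which is excluded) or is the second dilation of a unimodular triangle; in the latter case a short explicit normalization of the heights over the three vertices puts $P$ inside $T\times[1-n,n]$, leaving finitely many possibilities. To salvage a self-contained slicing argument along your lines you would in effect have to reprove the three-dimensional cases of Hensley's and Nill--Ziegler's theorems, which is far more than a ``careful analysis of projections''.
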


This opens the possibility of a complete classification of lattice $3$-polytopes of each fixed size: those of width one admit the same description and classification as the one we use for size five (two subconfigurations in consecutive parallel  lattice planes) and those of width larger than one are a finite list. 

The next case after the one in this paper, the full classification of lattice $3$-polytopes with six points, is undertaken in~\cite{6points}. The techniques used here and in~\cite{6points} could in principle carried over to larger sizes, but the case studies and complications involved would make them unpractical. Instead, in~Section~\ref{sec:finiteness} we sketch a general recursive method to algorithmically  classify  $3$-polytopes of a certain size $n$ and width larger than one if the classification is known for size $n-1$. 
The detailed description and proof of correctness of this method will appear in~\cite{quasiminimal}, together with an implementation giving the full classification up to $n=11$.

\medskip

Let us finish by mentioning that our motivation comes partially from the notion of \emph{distinct pair-sums} lattice polytopes (or dps polytopes, for short), defined as lattice polytopes in which all the pairwise sums $a+b, a,b\in P\cap \Z^d$, are distinct~\cite{ChoiLamReznick}. They are also the lattice polytopes of Minkowski length equal to one, in the sense of~\cite{Beckwith_etal}. For example, $d$-polytopes of size  $d+2$ are dps if and only if their signature is neither $(2,2)$ nor $(2,1)$.
Since dps $d$-polytopes cannot have two lattice points in the same class modulo $(2\Z)^d$, they have size at most $2^d$. 
In particular, \cite{quasiminimal} contains the full classification of the (finitely many) dps $3$-polytopes of width larger than one.

\medskip

\noindent\textbf{Acknowledgment:} We thank Bruce Reznick for pointing us to useful references on the topic of this work.

%!TEX root =articulo5.tex

\section{Preliminaries on lattice \texorpdfstring{$3$}{3}-polytopes}
\label{sec:preliminaries}

We here review some concepts needed in the classification of lattice $3$-polytopes. All the contents are either known or their proofs can be considered routine.

\subsection{Volume vectors}
\label{sec:volumevector}

Since $\Z$-equivalence preserves volume, the following is a useful invariant:

\begin{definition}
\label{def:VolumeVectors}
Let $A=\{  p_1,p_2, \dots ,p_n  \}$, with $n \ge d+1$, be a finite set of lattice points in $\Z^d$.
The \emph{volume vector of $A$} is the vector
\[
{w}= (w_{i_1\dots i_{d+1}})_{1 \le i_1 <\dots <i_{d+1}\le n} \in \mathbb{Z}^{ \binom{n}{d+1}}
\]
where
\begin{equation}
w_{i_1\dots i_{d+1}}
%=w_{\{i_1,\dots, i_{d+1}\}}
:=\det \left( \begin{array}{ccc}
	1	&\dots	&1\\      
	p_{i_1}	&\dots	&p_{i_{d+1}}\\
\end{array}
\right).
\label{eq:volume}
\end{equation}
\end{definition}

The definition of volume vector implicitly assumes a specific ordering of the $n$ points in $A$. When we say that the volume vector is $\Z$-equivalence invariant, this ordering (and the fact that the sign of each volume entry depends on the ordering) has to be taken into account. 

We now look at the converse question: if two point sets of the same size have the same volume vector, are they necessarily $\Z$-equivalent? The answer is \emph{almost} yes: the volume vector is a~\emph{complete} invariant for $\Z$-equivalence when its gcd equals 1:

\begin{proposition}
\label{prop:VolumeVectors}
Let $A=\{ p_1, \dots , p_n  \}$ and $B= \{ q_1, \dots , q_n \}$ be $d$-dimensional subsets of $\Z^d$ and suppose they have the same volume vector $w=(w_I)_{I \in \binom{[n]}{d+1}}$ with respect to a given ordering. Then:

\begin{enumerate}
\item There is a unique unimodular affine map $t: \R^d\to \R^d$ with $t(A)=B$ (respecting the order of points).

\item If $\gcd_{I \in \binom{[n]}{d+1}} \ (w_I)=1$, then $t$ is a $\Z$-equivalence between $A$ and $B$.

\end{enumerate}
\end{proposition}

\begin{proof}
Without loss of generality we may assume that $w_{1,\dots,d+1} \neq 0$. 
This means that  $\{ p_1, \dots, p_{d+1}  \}$ and $\{ q_1, \dots, q_{d+1} \}$ both span $\R^d$. Then there exists a unique affine map $t: \R^d\to \R^d$ with $t(p_i)=q_i$ for $i\in \{1,\dots,d+1\}$. On the other hand, both $\conv\{ p_1, \dots, p_{d+1}  \}$ and $\conv\{ q_1, \dots, q_{d+1} \}$ have the same volume $w_{1,\dots,d+1}$, which implies that $\det(t)=1$. We claim that $t(p_i)=q_i$ also for $i>d+1$.

To show this, simply observe that for each point $p_i$ with $i>d+1$ the affine dependence on $\{p_1, \dots, p_{d+1},p_i\}$ (which is encoded in the volume vector of $A$) allows to write $p_i$ as an affine combination of $\{p_1,\dots, p_{d+1}\}$. 
Since $t$ preserves affine combinations, $t(p_i)=q_i$. This finishes the proof of part (1).

For part $(2)$, let $\Lambda (A), \Lambda (B) \leq \mathbb{Z}^d$ be the affine sublattices spanned respectively by $A$ and $B$. Since $t$ maps $A$ to $B$,  it maps $\Lambda (A)$ to $\Lambda (B)$. The index $[\mathbb{Z}^d : \Lambda (A)]$ is the minimal volume (with respect to $\mathbb{Z}^d$) of a basis of $\Lambda (A)$. Thus the indices of both $A$ and $B$ divide $w_I$ for all $I \in \binom{[n]}{d+1}$, and therefore they divide $\gcd(w_I)_I$. In particular, if $\gcd(w_I)_I=1$, then  $\Lambda (A) = \mathbb{Z}^d = \Lambda (B) $. This implies $t$ maps $\mathbb{Z}^d$ to itself, so it is a $\Z$-equivalence.
\end{proof}

From the volume vector of a point configuration $A$ with $n \ge d+1$ points, we can recover the volume vector of any subconfiguration. 

Let us look at configurations with $d+2$ points. Remember that if $d+2$ points $\{p_1, \dots ,p_{d+2}\}$  affinely span $\R^d$ then they have a unique (modulo a scalar factor) affine dependence.
The volume vector of $d+2$ points encodes its dependence as follows: let $I_k=\{1, \dots, d+2\} \setminus \{k\}$
\begin{equation}
  \sum_{k=1}^{d+2} (-1)^{k-1} \cdot w_{I_k} \cdot p_{k}=0, \quad \quad
 \sum_{k=1}^{d+2}  (-1)^{k-1} \cdot w_{I_k} =0.
\label{eqn:dependence}
\end{equation}
The points with non-zero coefficient in this dependence form a \emph{circuit}. The \emph{signature} of the circuit is the pair $(i,j)$ if this dependence has $i$ positive and $j$ negative coefficients. 
(See more details in~\cite{deLoeraRambauSantos2010}). We call \emph{signature} of the $d+2$ points the signature of its (unique) circuit; $(i,j)$ and $(j,i)$ are the same signature.

\begin{remark}
For five points $A=\{p_1,\dots,p_5\} \subset \R^d$, the signature $(i,j)$ of $A$ can be $(2,1)$, $(2,2)$, $(3,2)$, $(3,1)$ or $(4,1)$, depicted in Figure~\ref{fig:signatures5points}.

\begin{figure}[ht]
\includegraphics[scale=.9]{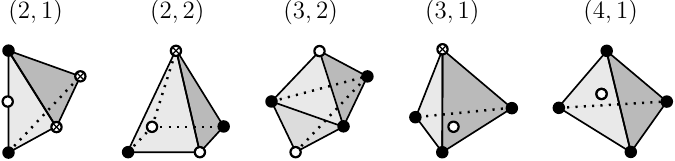}
\caption{The five possible signatures (oriented matroids) of five different points in $\R^3$. Black and white dots represent the positive and negative parts of the circuit, respectively; crossed dots mark points that are not in the circuit.}
\label{fig:signatures5points}
\end{figure}

In this situation we modify our sign and order conventions for writing the volume vector, in order to make the signature (and its correspondence to subsets of $A$) more explicit. More precisely, we take as volume vector for five points $p_1,\dots,p_5$ the vector $(v_1,v_2,v_3,v_4,v_5)$ where 
\[
\sum v_i p_i=0, \qquad \sum v_i=0
\]
is the unique affine dependence on $A$, normalized so that $|v_i|=\vol(\conv(A \setminus \{i\}))$. 
In particular, this way the signature of $A$ equals the number of positive and negative entries in the volume vector.
Put differently (see  Equation~\eqref{eqn:dependence}):
\[
(v_1,v_2,v_3,v_4,v_5) = (w_{2345}, \ -w_{1345},\ w_{1245},\ -w_{1235},\ w_{1234})
\]
where $w_{ijkl}$ is as in Equation~\eqref{eq:volume}. 
\end{remark}

\subsection{Empty tetrahedra}
\label{sec:empty_simplices}
Polytopes of dimension three and of size four---the smallest possible---are
called \emph{empty tetrahedra} since they are lattice tetrahedra without lattice points apart from their vertices.
The analogue of Theorem~\ref{thm:5points} for them is classical. Observe that it implies all empty tetrahedra to have width one:

\begin{theorem}[Classification of empty tetrahedra, White 1964~\cite{White}]
\label{thm:empty_tetrahedra}
Every empty tetrahedron of volume $q$ is unimodularly equivalent to 
\[
T(p,q):= \conv \{ (0,0,0), (1,0,0), (0,0,1), (p,q,1) \},
\]
for some $p\in\Z$ with $\gcd(p,q)=1$. Moreover, $T(p,q)$ is $\Z$-equivalent to $T(p',q)$ if and only if $p'=\pm p^{\pm 1} \pmod q$.
\end{theorem}

We often need to check whether a given tetrahedron $T$ is empty. 
Theorem~\ref{thm:empty_tetrahedra} allows us to proceed as follows: first check that one particular facet of $T$ is empty (equivalently, ``unimodular in the lattice plane containing it''), so that we can unimodularly map this facet to $\conv\{(0,0,0), (1,0,0), (0,1,0)\}$ and the fourth vertex of the tetrahedron to $(a,b,q)$. Then use the following lemma:

\begin{lemma}
\label{lemma:empty(a,b,q)}
The lattice tetrahedron $T=\conv\{(0,0,0),$ $(1,0,0),$ $(0,1,0), (a,b,q)\}$ is empty in $\Z^3$ if, and only if, 
one of the following conditions holds:
\begin{enumerate}
\item[(i)] $a \equiv 1 \pmod q$ and $\gcd(b,q)=1$.
\item[(ii)] $b \equiv 1 \pmod q$ and $\gcd(a,q)=1$.
\item[(iii)] $a + b \equiv 0 \pmod q$ and $\gcd(a,q)=1$.
\end{enumerate}
\end{lemma}

\begin{proof}
By Theorem~\ref{thm:empty_tetrahedra}, $T$ is empty if, and only if, all its edges are primitive and its width equals one with respect to some pair of opposite edges. 
The first equation in parts (i), (ii) and (iii) of the statement expresses width one, respectively, with respect to the three pairs of opposite edges. It is complemented with a condition expressing primitivity of the edges in each pair.
\end{proof}

For future reference we include the following statement which can be read as ``no vertex of an empty tetrahedron is more special than the others''.

\begin{lemma}
\label{lemma:verticesT(p,q)}
Let $u$ be a vertex of the empty tetrahedron $T(p,q)$, for some $1 \le p \le q$, with $\gcd(p,q)=1$. Then, there exists a $\Z$-equivalence sending $u$ to  $(0,0,0)$ and mapping $T(p,q)$ either to itself or to $T(p',q)$, where $p' \equiv p^{-1} \mod q$.
\end{lemma}

\begin{proof}
Recall that the vertices of $T(p,q)$ are $p_0=(0,0,0)$, $p_1=(1,0,0)$, $p_2=(0,0,1)$ and $p_3=(p,q,1)$.
Consider the following transformations $t_i$, $i\in \{1,2,3\}$:
{\small
\[
t_1 (x,y,z)=\left( \begin{array}{ccc}
       -1 	&	0	&	p-1	\\      
	0	&	-1	&	q \\
	0	&	0	&	1
\end{array}\right)\left( \begin{array}{c}
      x\\      
	y\\
	z
\end{array}\right)+\left( \begin{array}{c}
      1\\      
	0\\
	0
\end{array}\right),
\]
\[
t_2 (x,y,z)=\left( \begin{array}{ccc}
       p' 	&	\frac{-pp'+1}{q}	&	0	\\      
	q	&	-q	&	0 \\
	0	&	0	&	-1
\end{array}\right)\left( \begin{array}{c}
      x\\      
	y\\
	z
\end{array}\right)+\left( \begin{array}{c}
      0\\      
	0\\
	1
\end{array}\right),
\]
\[
t_3 (x,y,z)=\left( \begin{array}{ccc}
       -p' 	&	\frac{pp'-1}{q}	&	1-p'	\\      
	-q	&	p	&	-q \\
	0	&	0	&	-1
\end{array}\right)\left( \begin{array}{c}
      x\\      
	y\\
	z
\end{array}\right)+\left( \begin{array}{c}
      p'\\      
	q\\
	1
\end{array}\right).
\]
}
Each $t_i$ sends $p_i$ to $p_0$; $t_1$ maps $T(p,q)$ to itself, while $t_2$ and $t_3$ map it to $T(p',q)$.
\end{proof}

\begin{remark}
\label{rmk:verticesT(p,q)}
The transformation $t_1$ in the proof (exchanging $(0,0,0) \leftrightarrow (1,0,0)$ and $(0,0,1)\leftrightarrow (p,q,1)$) is the only unimodular transformation, other than the identity, sending $T(p,q)$ to itself  for every $p$ and $q$. The other $22$ affine automorphisms of $T(p,q)$ are automorphisms of $\Z^3$ only for particular values of $(p,q)$.

This means that the sentence ``no vertex of an empty tetrahedron is more special than the others'' is not true if we fix a particular class $T(p,q) \subset \Z^3$ of simplices. If we want to stay within a particular class $T(p,q)$, and in this class $p\not\equiv  p^{-1} \pmod q$, then the vertices $(0,0,0)$ and $(1,0,0)$ are in one orbit of the unimodular automorphism group of $T(p,q)$ and $(0,0,1)$ and $(p,q,1)$ in another. 
\end{remark}

%!TEX root =articulo5.tex

\section{A structure theorem for \texorpdfstring{$3$}{3}-polytopes of size five}
\label{sec:structure}

In this section we  prove Theorems~\ref{thm:width1} and~\ref{thm:width2}, the two parts of Theorem~\ref{thm:structure}.

\subsection{A convenient change of coordinates}

When dealing with empty tetrahedra it is often useful to make a change of coordinates so that instead of having a tetrahedron of volume $q$ with respect to $\Z^3$ we have a tetrahedron whose vertices span $\Z^3$ as an affine lattice, but considered as a lattice polytope with respect to a finer lattice. A similar transformation is used, for example, in~\cite{SantosZiegler}.

\begin{proposition}
\label{prop:change-of-coordinates}
Let $p\in\{1,\dots, q\}$  be integers with $\gcd(p,q)=1$.
The linear map $s(x,y,z)= (-y/q+z, y/q, x-py/q)$
maps $T(p,q)$ to the standard tetrahedron 
\[
T_0:=\conv\{o=(0,0,0), e_1=(1,0,0), e_2=(0,1,0), e_3=(0,0,1)\},
\]
and sends $\Z^3$ isomorphically to the lattice
\[
\Lambda(p,q) := \langle(1/q, -1/q, p/q)\rangle + \Z^3.
\]

In particular, $T_0$ is empty in $\Lambda(p,q)$.
\qed
\end{proposition}

Since $T(p,q)$ has width one with respect to the functional $z$ in the integer lattice,  $T_0$ has width one with respect to the functional $x+y$ in the lattice $\Lambda(p,q)$. This implies that
all lattice points of $\Lambda(p,q)$ lie in the family of integer lattice planes  $\{(x,y,z) : x+y\in \Z\}$. This suggests we consider the following rectangle, which is a fundamental rectangle of $\Lambda(p,q) \cap \{x+y =0\}$:
\[
R(p,q):=\conv\{(0,0,0), (0,0,1), (1,-1,0), (1,-1,1)\}.
\]
Also, since the edges $oe_3$ and $e_1e_2$ of $T_0$ are primitive in $\Lambda(p,q)$, the vertices of $R(p,q)$ are the only lattice points in its boundary. Hence \emph{all lattice points of $\Lambda(p,q)\setminus \Z^3$ lie in the relative interior of a unique integer translation of $R(p,q)$}, as illustrated in Figure~\ref{fig:Lambda(p,q)}.

\begin{figure}[htb]
\includegraphics[scale=0.7]{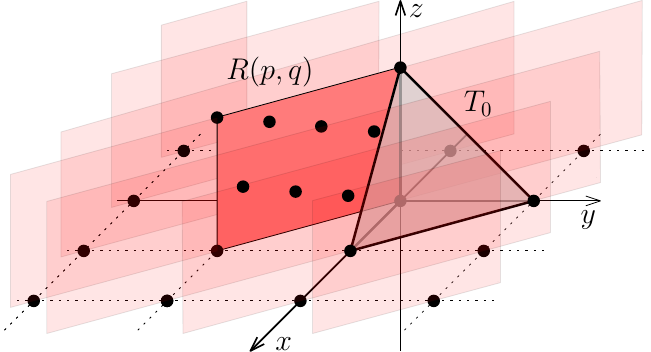}
\caption{The lattice points of $\Lambda(p,q)$ in the standard tetrahedron $T_0$, the plane $\{z=0\}$  and the rectangle $R(p,q)$. Transparent rectangles represent integer translations of $R(p,q)$. (The picture is for $q=7$ and $p=4$).}
\label{fig:Lambda(p,q)}
\end{figure}

In the proof of Theorem~\ref{thm:width1} we need the following result about the 
rectangle $R(p,q)$, illustrated in Figure~\ref{fig:trianglesR(p,q)}.

\begin{lemma}
\label{lemma:trianglesR(p,q)}
Let $q\ge 2$ and let $p\in \{1,\dots,q-1\}$ with $\gcd(p,q)=1$. 
\begin{enumerate}
\item The triangle $\Delta_1:=\conv\{(0,0,0), (1,-1,0), (1,-1,1/2) \}\subset R(p,q)$ contains non-integer points of $\Lambda(p,q)$ if and only if $p\in\{2,\dots,q-2\}$.
\label{item:fund1}
\item The triangle $\Delta_2:=\conv\{(0,0,0), (1,-1,0), (1/2,-1/2,1/2) \}\subset R(p,q)$  contains non-integer points of $\Lambda(p,q)$.
\label{item:fund2}
\end{enumerate}
\end{lemma}

\begin{proof}
Let $p':=p^{-1} \mod q \in \{1,\dots,q-1\}$. The point $(p'/q,-p'/q,1/q)\in \Lambda(p,q)$, and it lies in the interior of $R(p,q)$. 
\begin{enumerate}
\item A point of $R(p,q)$ lies in $\Delta_1$ if and only if it belongs to the halfspace $x-y-4z \ge 0$. Hence $(p'/q,-p'/q,1/q) \in \Delta_1$ if and only if $p' \ge 2$. In the case of $p'=1$ we have that $p=1$ and then every non-integer point of $R(p,q)$ lies in the diagonal $x-y-2z=0$ (and hence not in $\Delta_1$).
\item A point of $R(p,q)$ lies in $\Delta_2$ if and only if it belongs to the halfspaces $x-y-2z \ge 0$ and $x-y+2z -2 \le 0$. Hence $(p'/q,-p'/q,1/q) \in \Delta_1$ for every value of $p'\in\{1,\dots, q-1\}$.
\end{enumerate}
\end{proof}

\begin{figure}[h]
\includegraphics[width=0.8\textwidth]{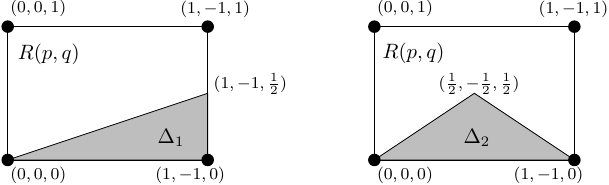}
\caption{Triangles $\Delta_1$ and $\Delta_2$ (gray areas). Black dots represent the integer points of $R(p,q)$.}
\label{fig:trianglesR(p,q)}
\end{figure}

%Simplices of type $T(1,q)$ are somehow special:  they have width one with respect to two different pairs of opposite edges, and the rectangles $R(1,q)$ have all lattice points in the diagonal. They are called \emph{tetragonal} in \cite{SantosZiegler}. Types $T(1,1)$ and $T(1,2)$ are even more special: they have width one with respect to any of the three pairs of opposite edges.

\subsection{Proof of Theorem~\ref{thm:structure}}
\label{sec:proof_of_structure}

\begin{theorem}
\label{thm:width1}
If $P$ is a lattice $3$-polytope of size $5$ and with signature $(3,2)$, $(2,2)$ or $(2,1)$, then $P$ has width one.
\end{theorem}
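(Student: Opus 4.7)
The plan is to split by signature. For signatures $(3,2)$ and $(2,2)$ the argument is direct: in both cases all five lattice points of $P$ are vertices. Signature $(3,2)$ makes $P$ a triangular bipyramid (the circuit expresses a segment crossing the interior of a triangle), while signature $(2,2)$ makes $P$ a square pyramid (the four coplanar points of the circuit form a parallelogram, and the fifth point must lie off that plane since $P$ is $3$-dimensional). In either case $P$ has no lattice point beyond its vertices, so Howe's theorem (quoted in the introduction) gives width one.

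The substantive case is signature $(2,1)$. Here three lattice points $A,C,B$ are collinear with $C$ strictly between $A$ and $B$. Since the segment $AB$ cannot contain any extra lattice points of $P$, necessarily $C=(A+B)/2$. Writing $D,E$ for the other two lattice points, $P=\conv\{A,B,D,E\}$ is a tetrahedron and $C$ lies on its edge $AB$. Howe's theorem does not apply directly here because $C$ is a non-vertex lattice point, so a different strategy is needed. The key auxiliary observation is that the sub-tetrahedron $T=\conv\{A,C,D,E\}$ cannot contain $B$, so its lattice points are precisely its four vertices; hence $T$ is an empty tetrahedron of some type $T(p,q)$.

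I would then apply Theorem~\ref{thm:change-of-coordinates} with the ordering of vertices chosen as $D\mapsto(0,0,0)$, $A\mapsto(1,0,0)$, $C\mapsto(0,1,0)$, $E\mapsto(0,0,1)$. This produces a lattice isomorphism $\mathbb{Z}^3\cong\Lambda(p,q)$ that sends $T$ to the standard simplex, and by affine linearity sends $B=2C-A$ to $(-1,2,0)$. The five lattice points of $P$ then become $\{(0,0,0),(1,0,0),(0,1,0),(0,0,1),(-1,2,0)\}$. I would now use the functional $f(x,y,z)=x+y$, which is integer on $\Lambda(p,q)$ for every $(p,q)$ because $f(1/q,-1/q,p/q)=0$. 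Its values on these five lattice points all lie in $\{0,1\}$, so it witnesses width one; pulling $f$ back along the coordinate change yields a width-one integer functional on the original $P\subset\mathbb{Z}^3$.

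The main obstacle is choosing the right ordering of the vertices of $T$. A naive labeling, say $A\mapsto(0,0,0)$ and $C\mapsto e_1$, would send $B$ to $(2,0,0)$ and force any candidate width-one functional to vanish on $e_1$, placing divisibility constraints on $p$ and $q$ that fail except for tetragonal or unimodular $T$. By instead putting $D$ at the origin and $A,C$ at $e_1,e_2$, the three collinear points $A,C,B$ all land in the affine hyperplane $\{x+y=1\}$, so that the \emph{universal} functional $f=x+y$, integer on every $\Lambda(p,q)$, separates the configuration into two consecutive parallel hyperplanes independently of $(p,q)$. Checking that Theorem~\ref{thm:change-of-coordinates} accommodates this particular ordering (with a possibly different $p$) and that the final values of $f$ are those claimed are both routine verifications.
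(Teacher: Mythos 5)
Your handling of signatures $(3,2)$ and $(2,2)$ is correct: in both cases all five lattice points are vertices of $P$ (a triangular bipyramid, resp.\ a pyramid over a planar quadrilateral), so Howe's theorem gives width one. The paper itself notes this shortcut in the introduction, although its proof of Theorem~\ref{thm:width1} treats all three signatures uniformly without Howe, by placing the empty tetrahedron $\conv\{p_1,p_2,p_3,p_4\}$ as the standard simplex in $\Lambda(p,q)$, observing that $p_5$ lies in $2T$, and using Lemma~\ref{lemma:fundamental-rectangle} for the three positions where $x+y=2$. Your reduction of the $(2,1)$ case to the empty tetrahedron $T=\conv\{A,C,D,E\}$ with $C=(A+B)/2$ is also correct.

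The gap is the step ``apply Theorem~\ref{thm:change-of-coordinates} with the ordering $D\mapsto o$, $A\mapsto e_1$, $C\mapsto e_2$, $E\mapsto e_3$; this produces a lattice isomorphism $\Z^3\cong\Lambda(p,q)$.'' That theorem does not accommodate an arbitrary ordering. The affine map sending the vertices of $T$ to $o,e_1,e_2,e_3$ in a prescribed order carries $\Z^3$ to \emph{some} index-$q$ superlattice of $\Z^3$, but that superlattice has the form $\Lambda(p',q)$ --- equivalently, $x+y$ is integer on it --- only if the pair of opposite edges of $T$ landing on $\{oe_3,\,e_1e_2\}$, which in your ordering is $\{DE,\,AC\}$, is a pair with respect to which $T$ has width one. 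By White's theorem an empty tetrahedron generically has width one with respect to exactly \emph{one} of its three pairs of opposite edges (compare Remark~\ref{rmk:verticesT(p,q)}: only $2$ of the $24$ vertex orderings are realized by lattice automorphisms), and nothing in your argument shows that this distinguished pair is $\{AC,DE\}$ rather than $\{AD,CE\}$ or $\{AE,CD\}$. If it is, say, $\{AD,CE\}$, the image lattice is one on which $y+z$, not $x+y$, is integer, and your functional does not pull back to an integer functional on $\Z^3$. Showing that the width-one pair of $T$ must separate $\{A,C\}$ from $\{D,E\}$ is essentially the whole content of the $(2,1)$ case and is not a routine verification: for instance $T=T(2,5)$ with $A=(0,0,0)$, $C=(0,0,1)$, $D=(1,0,0)$, $E=(2,5,1)$ meets all your hypotheses on $T$ yet has width-one pair $\{AD,CE\}$; there the resulting $P=\conv\{A,2C-A,D,E\}$ happens to acquire the extra lattice point $(1,2,1)$, so it does not contradict the theorem, but ruling out all such configurations is exactly the missing argument. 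The paper sidesteps this by letting the change of coordinates place the vertices of $T$ in whatever order Theorem~\ref{thm:change-of-coordinates} and Lemma~\ref{lemma:verticesT(p,q)} permit (the hypotheses being symmetric in $p_1,p_2,p_3$) and then locating the fifth point inside $2T$ by its barycentric coordinates.
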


\begin{proof}
Let $A=\{p_1,p_2,p_3,p_4,p_5\}$ be the lattice points in $P$. We assume the points  ordered so that the volume vector $v=(v_1,v_2,v_3,v_4,v_5)$ of $A$ verifies $v_i\le 0 <v_4\le v_5$, $i=1,2,3$. That is, points $p_4$ and $p_5$ (corresponding to the ``2'' in the signature) lie in opposite sides of the plane generated by the triangle $\conv \{p_1,p_2,p_3\}$.

Consider the empty tetrahedron $T:=\conv \{p_1,p_2,p_3,p_4\}$, of volume $q=v_5$. By Proposition~\ref{prop:change-of-coordinates}, we can consider $T$ to be the standard tetrahedron $T_0$ %$\conv\{(0,0,0), (1,0,0), (0,1,0), (0,0,1)\}$ 
in the lattice $\Lambda(p,q)$ for some $p$ coprime with $q$. Moreover, by Lemma~\ref{lemma:verticesT(p,q)} we can assume $p_4=(0,0,0)$ and, by symmetry of the conditions so far on the points $p_1$, $p_2$ and $p_3$, we can assume that  $p_1=(1,0,0)$, $p_2=(0,1,0)$, $p_3=(0,0,1)$.

The affine dependence $\sum v_i p_i=0$ implies that $p_5=\frac{-1}{v_5}(v_1,v_2,v_3)$ and, since $v_5>0$ and $v_i\le 0$ for $i\in \{1,2,3\}$, $p_5$ lies in the closed positive orthant. Also,  since $\sum v_i=0$ and $v_4\le v_5$, we have $2v_5 \ge v_4+v_5 =-(v_1+v_2+v_3)$. Hence:
\[
p_5\in \{(x,y,z): x+y+z \le 2, x\ge0,y\ge 0, z\ge 0\}= 2T_0.
\]

Remember that all lattice points in $\Lambda(p,q)$ lie in $\{(x,y,z)\in \R^3 : x+y\in \Z\}$. In particular, $P\subset 2T_0$ has width at most two. Moreover, $P$ can have width two only if $p_5$ is one of the points of $\Lambda(p,q)\cap 2T_0$ with $x+y=2$, namely $(2,0,0)$, $(1,1,0)$ and $(0,2,0)$ (see Figure~\ref{fig:signatures-structure}, left). Let us see that in these three cases either $P$ has width one with respect to another functional or $P$ has additional lattice points in the translation $R':=(0,1,0)+ R(p,q)$ of $R(p,q)$, which is a contradiction:
\begin{itemize}
\item If $p_5=(2,0,0)$ then $P\cap R'$ is the triangle $\conv\{(1,0,0),$ $(0,1,0), (1,0,1/2)\}$. By (a translated version of) Lemma~\ref{lemma:trianglesR(p,q)}(\ref{item:fund1}), for this triangle not to contain additional lattice points of $\Lambda(p,q)$ we need $p=1$. But in this case $P$ has width one  with respect to the functional $y+z$.
\item The case $p_5=(0,2,0)$ is analogous, exchanging the roles of $x$ and $y$.
\item If $p_5=(1,1,0)$ then $P\cap R'$  is  $\conv\{(1,0,0),$ $(0,1,0),$ $(1/2,1/2,1/2)\}$. By (a translated version of) Lemma~\ref{lemma:trianglesR(p,q)}(\ref{item:fund2}), this triangle has additional lattice points, independently of the value of $p$, unless $q=1$. But if $q=1$ then $P$ has width one with respect to $z$.
\end{itemize}
\end{proof}

\begin{figure}[h]
\includegraphics[scale=.815]{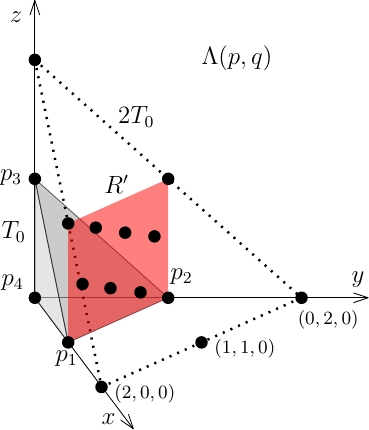}\qquad\includegraphics[scale=.815]{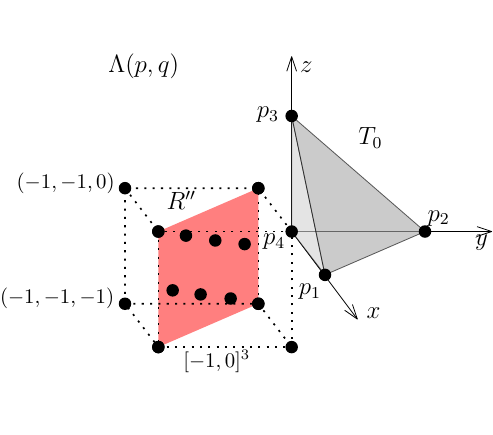}\\
Signature $(2,*)$\qquad\qquad \qquad \qquad Signature $(*,1)$
\caption{The idea in the proofs of Theorems~\ref{thm:width1} and~\ref{thm:width2}. In case of signature  $(2,*)$ (left) the lattice points in $P$ are the four vertices of the standard tetrahedron $T_0$ (in gray) plus a fifth  point guaranteed to lie in $2T_0$ (dotted lines). In case of signature $(*,1)$ (right) the lattice points in $P$ are the four vertices of $T_0$ plus a fifth point  guaranteed to lie in  $[-1,0]^3$ (dotted lines). In both cases, all non-integer lattice points lie in rectangles $R'$ and $R''$, respectively.}
\label{fig:signatures-structure}
\end{figure}
%\bigskip 

\begin{theorem}
\label{thm:width2}
Let $P$ be a lattice $3$-polytope of size $5$ and signature $(4,1)$ or $(3,1)$. Let $T$ be the empty lattice tetrahedron of largest volume $q\ge1$ contained in $P$. Then there exists an affine integer  functional taking values $1,1,0,0$ in $T$, and $h \in \{-1,-2\}$ in the fifth point.
Moreover, in the case of signature $(4,1)$, having $h=-2$ is equivalent to having a volume vector of the form $(-4q,q,q,q,q)$ (that is, the interior point is the centroid of the other four).
\end{theorem}

\begin{proof}
Let $A=\{p_1,p_2,p_3,p_4,p_5\}$ be the lattice points in $P$ and assume the points are ordered so that the volume vector $v=(v_1,v_2,v_3,v_4,v_5)$ of $A$ verifies $v_4< 0\le v_i\le v_5$, $i=1,2,3$. That is, point $p_4$ (corresponding to the ``1'' in the signature) lies in the tetrahedron $\conv \{p_1,p_2,p_3,p_5\}=P$ and the (empty) tetrahedron $T:=\conv \{p_1,p_2,p_3,p_4\}$, of volume $q=v_5$, has the maximum volume among the empty tetrahedra in $A$.

As in the previous proof, we can assume without loss of generality that $T$ is the standard tetrahedron $T_0$ in the lattice $\Lambda(p,q)$, for some $p$ coprime with $q$, and that its four lattice points are $p_4=(0,0,0)$, $p_1=(1,0,0)$, $p_2=(0,1,0)$ and $p_3=(0,0,1)$. The fifth point is, again, $p_5=\frac{-1}{v_5}(v_1,v_2,v_3)$.  In this case, since $0 \le v_i\le v_5$ for $i\in \{1,2,3\}$, we get that $p_5$ lies in $[-1,0]^3$. More specifically, $p_5$ is either one of the vertices of $[-1,0]^3$, or it is a non-integer point in the translation $R'':=(-1,0,-1)+ R(p,q)$ of $R(p,q)$ (see Figure~\ref{fig:signatures-structure}, right).

This implies the statement for the functional $f(x,y,z)=x+y$. Indeed, $f$ takes only values $\{0,-1,-2\}$ in $[-1,0]^3$, and $f(p_5)=0$ would imply  signature $(2,1)$. 

For the ``moreover'' part, observe that $h=-2$ means $p_5$ to be either $(-1,-1,0)$ or $(-1,-1,-1)$. The first possibility gives signature $(3,1)$, and the second gives signature $(4,1)$ and a symmetric volume vector $(-4q,q,q,q,q)$.
\end{proof}

%!TEX root =articulo5.tex

\section{Classification of $3$-polytopes with five lattice points}
\label{sec:classification}

In Section~\ref{sec:width1} we completely classify $3$-polytopes of size 5 and width one. By Theorem~\ref{thm:width1} this covers signatures $(2,1)$, $(2,2)$ and $(3,2)$. 
In Sections~\ref{sec:3-1},~\ref{sec:4-1-nonsymmetric} and~\ref{sec:4-1-symmetric} we look at signatures $(3,1)$ and $(4,1)$, using the  properties proved in Theorem~\ref{thm:width2}.

\subsection{Polytopes of width \texorpdfstring{$1$}{1}}
\label{sec:width1}

%The classification of configurations of width one is basically a $2$-dimensional problem:

\begin{theorem}
\label{thm:width1_description}
Let $P$ be a lattice polytope of size five and width one. Then $P$ is unimodularly equivalent to one of:
\begin{enumerate}

\item $\conv\{(0,0,0), (1,0,0), (0,1,0), (1,1,0),(0,0,1)\}$, of signature $(2,2)$.% and has volume vector $(-1,1,1,-1,0)$.

\item $\conv\{(0,0,0), (1,0,0), (0,1,0), (-1,-1,0),(0,0,1)\}$, of signature $(3,1)$.%and has volume vector $(-3,1,1,1,0)$.

\item  $\conv\{(0,0,0), (1,0,0), (0,0,1), (-1,0,0),(p,q,1)\}$,  for some $p,q\in \Z$ with $0\le p \le \lfloor q/2\rfloor$ and $\gcd(p,q)=1$. This is of signature $(2,1)$.
% and has volume vector $(-2q,q,0,q,0)$.

\item $\conv\{(0,0,0), (1,0,0), (0,1,0),(0,0,1), (a,b,1)\}$, with $0<a\le b$ and $\gcd(a,b)=1$. This is of signature $(3,2)$.
% and has volume vector $(-(a+b),a,b,1,-1)$.
\end{enumerate}
Moreover, two such polytopes are never $\Z$-equivalent to one another.
\end{theorem}

\begin{proof} 
Width one means the $5$ lattice points of $P$ lie in two consecutive lattice planes. Say $n_0$ points are in $\{z=0\}$ and $5-n_0$ in $\{z=1\}$ with $n_0\ge 5-n_0$. This implies $n_0\in\{3,4\}$.

\begin{itemize}[leftmargin=.7cm]

\item If $n_0=3$, then there are two possibilities:

\begin{itemize}[leftmargin=0.7cm]
\item If the three points at $z=0$ are collinear, without loss of generality we can assume they are $(-1,0,0)$, $(0,0,0)$ and $(1,0,0)$. One of the points at $z=1$ can be assumed to be $(0,0,1)$ and the fifth point has coordinates $(p,q,1)$ with $q\ne 0$ (in order to be full-dimensional) and $\gcd(p,q)=1$ (in order for the edge at $z=1$ to be primitive). The map $(x,y,z)\mapsto (x-y\lfloor p/q+1/2\rfloor,y, z)$ allows us to assume $|p| \le |q|/2$.
Symmetry with respect to the planes $x=0$ and $y=0$ allows us to assume that $0\le p \le \lfloor q/2\rfloor$.

\item If the three points at $z=0$ are not collinear then they form a unimodular triangle, and without loss of generality we assume they are $(0,0,0)$, $(1,0,0)$ and $(0,1,0)$. One of the points at $z=1$ can be assumed to be $(0,0,1)$ and the fifth point has coordinates $(a,b,1)$. By the same argument as before, we need $\gcd (a,b)=1$. By symmetries with respect to the triangle at $z=0$ we can assume $0\le a \le b$ (details are left to the reader).
This configuration has volume vector $(-(a+b),a,b,1,-1)$, so it has signature $(3,2)$ unless $a=0$ (and hence $b=1$ since $\gcd(0,b)=b$). In the case $(a,b)=(0,1)$ we recover the configuration of part (1).
\end{itemize}
\item If $n_0=4$, then the position of the fifth point (within the plane $z=1$) does not affect the $\Z$-equivalence class of $P$, and there are the following three possibilities for the four points at $z=0$. The first two are the configurations of parts (1) and (2). The third one is $\Z$-equivalent to that of part (3) with $(p,q)=(0,1)$:
\medskip

\centerline{
\includegraphics[scale=0.6]{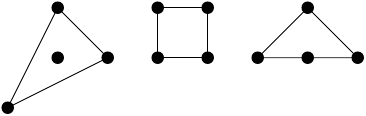}
}

\end{itemize}

This finishes the case study, but we still need to check that different configurations in the list are not $\Z$-equivalent. 
Within those of signature $(3,2)$, since the volume vector is primitive, Proposition~\ref{prop:VolumeVectors} says that different values of $(a,b)$ produce inequivalent configurations. In signature $(2,1)$, however, the volume vector is $ (q,q,0,0,-2q)$ so, a priori, configurations with different $p$ and the same $q$ could still be $\Z$-equivalent. Let us prove that they are not.

For this, let $q$ be fixed and let $p, p' \in \mathbb{Z}$. Let $P$ and $P'$ be two of these configurations having $(p,q,1)$ and $(p',q,1)$ as their fifth point, respectively.
All affine transformations that map $P$ to $P'$ must preserve the collinearity of the three points at $z=0$, so they fix $(0,0,0)$ and either fix or exchange $(1,0,0)$ and $(-1,0,0)$. Similarly, they either fix $(0,0,1)$ and send $(p,q,1)$ to $(p',q,1)$, or they send $(p,q,1)$ to $(0,0,1)$ and $(0,0,1)$ to $(p',q,1)$. So we have four possibilities:
\[
\begin{array}{ll}
(x,y,z)\mapsto\left(x + \frac{p'-p}q y, y , z\right), \quad\ &
(x,y,z)\mapsto\left(x + \frac{-p'-p}q y+p'z, -y +qz, z\right), \\
(x,y,z)\mapsto\left(-x + \frac{p'+p}q y, y , z\right), &
(x,y,z)\mapsto\left(-x + \frac{p-p'}q y+p'z, -y +qz, z\right).
\end{array}
\]
For any of them to be integer we need $p\equiv \pm p'\pmod q$.
\end{proof}
%\bigskip

\subsection{Configurations of signature \texorpdfstring{$(3,1)$}{(3,1)}}
\label{sec:3-1}

\begin{theorem}
\label{thm:(3,1)thm}
Every polytope $P$ of signature $(3,1)$ and size $5$ has volume vector equal to $(-3q,q,q,q,0)$ with $q\in\{1,3\}$ and is unimodularly equivalent to one of 
\begin{enumerate}
\item $\conv\{(0,0,0),(1,0,0),(0,0,1),(-1,0,-1), (0,1,0)\}$ (of width one) or
\item $\conv\{(0,0,0),(1,0,0),(0,0,1),(-1,0,-1), (2,3,1)\}$ (of width two).
\end{enumerate}
\end{theorem}

\begin{proof}
For $P$ not to have extra lattice points in the plane containing the $(3,1)$ circuit 
we need the interior point in this coplanarity to be the centroid of the other three. That is, the volume vector must be of the form $(-3q,q,q,q,0)$ (modulo reordering of the points), and all empty subtetrahedra have the same volume.

By Theorem~\ref{thm:width2},  $P$ consists of an empty tetrahedron  $T$ containing two points at each $z=0,1$, and a fifth point at height $h \in \{-1,-2\}$. Without loss of generality, we take the following coordinates:
\[
p_1= (0,0,0), \quad
p_2= (1,0,0), \quad
p_3= (0,0,1), \quad
p_4= (p,q,1), \quad
p_5=(a,b,h).
\]
for some $p \in \Z$ coprime with $q$, and $h=-1,-2$. 

Let us first argue that we can assume $h=-1$.  For this, suppose that $h=-2$ and let us find an affine integer functional $f$ taking values $\{1,1,0,0,-1\}$ in the five points, so that a change of coordinates gives $h=-1$. Of the five lattice points in $P$, both the centroid of the $(3,1)$ circuit and the point that is not in the circuit must lie in the plane $\{z=0\}$ (points $p_1$ and $p_2$). By Lemma~\ref{lemma:verticesT(p,q)} there is no loss of generality in assuming that $p_1$ is the centroid of $p_3$, $p_4$ and $p_5$, so $p_5=3p_1-p_3-p_4=(-p,-q,-2)$.
If $q=1=p$, then the functional $x$ takes the desired values, so assume now that $q>1$.
Lemma~\ref{lemma:empty(a,b,q)} says that in order for the tetrahedron $\conv\{p_1,p_2,p_3,p_5\}$ to be empty we must have one of the following conditions:

\begin{itemize}
\item $p = q-1$ and $\gcd(2,q)=1$. Then take $f(x,y,z)=x-y$.
\item $p = q-2$ and $\gcd(2,q)=1$.  Same, with $f(x,y,z)=x-y+z$.
\item $-2 \equiv 1 \pmod q$ and $\gcd(p,q)=1$. That is, $q=3$ and $p\in \{1,2\}\pmod 3$. This is a particular case of one of the two above, depending on whether $p=2$ or $1$.
\end{itemize}
%\medskip

So for the rest of the proof $h=-1$.  This implies the centroid of the $(3,1)$ circuit is one of $p_1$ or $p_2$ and the point not in the circuit is one of $p_3$ and $p_4$. Then, by Lemma~\ref{lemma:verticesT(p,q)} there is no loss of generality in assuming that $p_1$ is the centroid of the circuit. Also, since the unimodular transformation $f(x,y,z)=(x-pz, -y+qz,z)$ fixes $p_1$ and $p_2$, and sends $p_4 \mapsto p_3$ and $p_4 \mapsto (-p,q,1)$,
there is no loss of generality in assuming that $p_4$ is the point not in the circuit. This implies $p_5=3p_1-p_2-p_3=(-1,0,-1)$. 

The intersection of $P$ with the plane $z=0$ is the triangle with vertices 
\[
p_2= (1,0,0), \quad
\frac{p_3+p_5}{2}=\left(\frac{-1}{2},0,0\right),\quad 
v=\frac{p_4+p_5}{2}=\left(\frac{p-1}{2},\frac{q}{2},0\right).
\]
The condition for $P$ to have size five is that the third vertex $v$ make this triangle not contain any lattice points other than $(0,0,0)$ and $(1,0,0)$. This implies  $q$ to be odd, because if $q$ is even then $v$ itself is a lattice point. (Remember that $\gcd(p,q)=1$). 

\begin{figure}[h]
\includegraphics[scale=1]{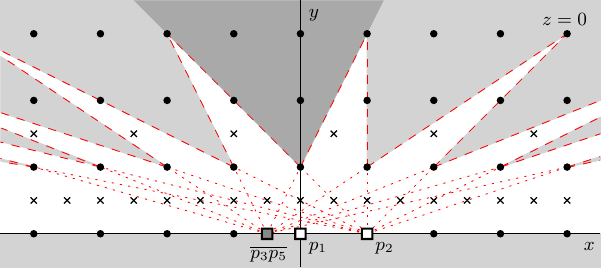}
\caption{{The case analysis in the proof of Theorem~\ref{thm:(3,1)thm}, in the plane $z=0$. White squares represent the points $p_1$ and $p_2$ of $P$. The gray square is the intersection of $p_3p_5$ with the displayed plane. Black dots are the lattice, and black crosses represent the possible intersection points of the edge $p_4p_5$ with the plane $z=0$. 
}}
\label{fig:(3,1)}
\end{figure} 

But other conditions are necessary. For example, in order for $(0,1,0)$ not to be in the triangle, $v$ must be outside the wedge with apex at $(0,1,0)$ and rays in the directions of $(1,2,0)$ and $(-1,1,0)$. (This is the central dark wedge in Figure~\ref{fig:(3,1)}).
The same consideration for the other lattice points of the form $(k,1,0)$ defines analogous wedges so that at the end the only half-integer points not excluded by the wedges are those with $q=1$ or with $q=3$ and $p\equiv2\pmod 3$. If $q=1$ then we get width one, and the configuration is unimodularly equivalent to the first one in the statement. If $q=3$ then all possibilities for $p$ are unimodularly equivalent to one another. Taking $p=2$ we get the second configuration in the statement.
\end{proof}

\subsection{Non-symmetric configurations of signature \texorpdfstring{$(4,1)$}{(4,1)}}
\label{sec:4-1-nonsymmetric}

%With similar arguments, but a more involved case study, we get:
\begin{theorem}
\label{thm:(4,1)distinct-volumes}
Apart of those with volume vector of the form $(-4q,q,q,q,q)$, every polytope $P$ of size five and signature $(4,1)$ is $\Z$-equivalent to the one whose lattice points are $(0,0,0)$, $(1,0,0)$, $(0,0,1)$ together with one of the following six pairs:
\begin{itemize}
\item $(1,2,1)$ and $(-1,-1,-1)$, volume vector $(-5,1,1,1,2)$.
\item $(1,3,1)$ and $(-1,-2,-1)$, volume vector $(-7,1,1,2,3)$.
\item $(2,5,1)$ and $(-1,-2,-1)$, volume vector $(-11,1,3,2,5)$.
\item $(2,5,1)$ and $(-1,-1,-1)$, volume vector $(-13,3,4,1,5)$.
\item $(2,7,1)$ and $(-1,-2,-1)$, volume vector $(-17,3,5,2,7)$.
\item $(3,7,1)$ and $(-2,-3,-1)$, volume vector $(-19,5,4,3,7)$.
\end{itemize}
\end{theorem}

\begin{proof}
As before, Theorem~\ref{thm:width2} allows us to take the following coordinates:
\[
p_1= (0,0,0), \quad
p_2= (1,0,0), \quad
p_3= (0,0,1), \quad
p_4= (p,q,1), \quad
p_5=(a,-b,-1),
\]
for some $p\in\Z$ with $\gcd(p,q)=1$. (We prefer not to assume $p\in\{1,\dots,q\}$ in this proof, in order to get more symmetric conditions later. The second coordinate in $p_5$ is denoted $-b$ because, as we will soon see, it must be negative).
Without loss of generality (by Lemma~\ref{lemma:verticesT(p,q)}) let $p_1$ be the interior point of $P$. 
Then the volume vector of $P:=\conv{\{p_1,p_2,p_3,p_4,p_5\}}$ is
\[
((a-2)q+bp,-pb-qa,q-b,b,q).
\]

To comply with our hypotheses the five entries must be non-zero, with sign vector $(-,+,+,+,+)$, and the last entry is the biggest among the positive ones (see Theorem~\ref{thm:width2}). This translates into:
\begin{equation}
0<b < q, \qquad 0<-pb-qa\le q.
\label{eqs:4_1}
\end{equation}

We need to find out what values of $a,b,p,q$ make the intersection of $P$ with $\{z=0\}$ not to have other lattice points than $p_1$ and $p_2$. This intersection must  contain $p_1$ in its interior (see Figure~\ref{fig:(4,1)setup}) and it equals the triangle $\Delta$ with vertices 
\[
p_2= (1,0,0), \quad
\frac{p_3+p_5}{2}=\left(\frac{a}{2},\frac{-b}{2},0\right),\quad 
\frac{p_4+p_5}{2}=\left(\frac{p+a}{2},\frac{q-b}{2},0\right).
\]

\begin{figure}[h]
\includegraphics[scale=0.5]{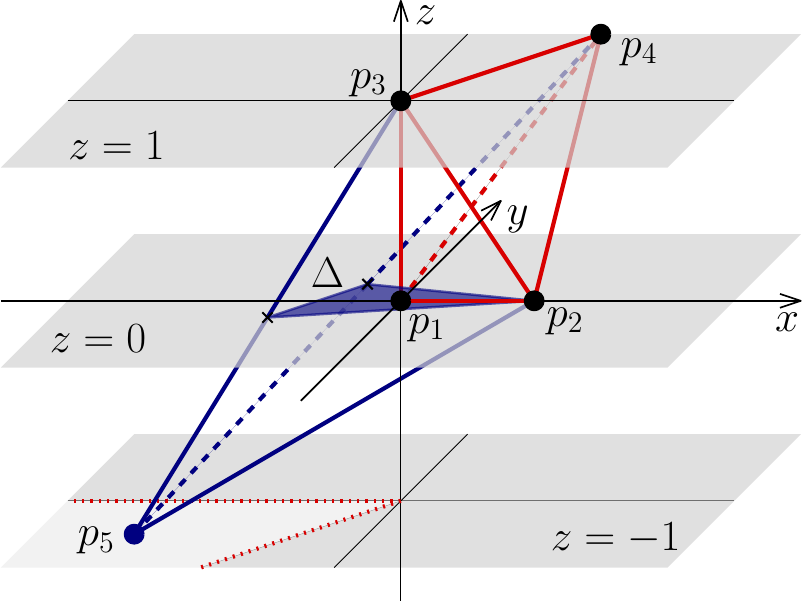}
\caption{The setting for the proof of Theorem~\ref{thm:(4,1)distinct-volumes}.}
\label{fig:(4,1)setup}
\end{figure}

In order to get more symmetric parameters we set $c=p+a$ and $d=q-b>0$, which turns equations~\eqref{eqs:4_1} into
\begin{equation}
b>0,\qquad d>0,
\qquad
0<-ad-bc\le d+b = q.
\label{eqs:4_1b}
\end{equation}
This translates our question into: what values of $a,b,c,d \in \Z$ satisfying equations~\eqref{eqs:4_1b}
have $(0,0)$ and $(1,0)$ as the only lattice points in the triangle 
\[
\Delta=\conv{\{ (1,0), (a/2,-b/2),(c/2,d/2)\}}.
\]
% (Observe that the equations already imply that $(0,0)$ is in the relative interior of $\Delta$). 
%In particular $a$ and $b$ cannot both be even, and the same for $c$ and $d$. 

We first make the following two reductions:
\begin{itemize} 
\item \emph{There is no loss of generality in assuming $b\ge d$.} For this, observe that the $\Z$-equivalence $f(x,y,z)=(x-pz, -y+qz,z)$ sends $p_1,\dots, p_5$ to 
\begin{eqnarray*}
p_1=f(p_1)= (0,0,0), \quad
p_2=f(p_2)= (1,0,0), \quad
p'_3:=f(p_4)= (0,0,1), \quad
\\
p'_4:=f(p_3)= (-p,q,1), \quad
p'_5:=f(p_5)=(a+p,-(q-b),-1) = (c,-d,-1),
\end{eqnarray*}
whose parameters $(a',b',c',d')$ are $(c,d,(a+p)-p,q-(q-b))=(c,d,a,b)$. 
\item \emph{There is no loss of generality in assuming $c=1$}.
Via the transformations $(x,y)\mapsto(x\pm y,y)$, we are only interested in $c$ modulo $d$. 
For $d>1$, taking into account that $(c/2,d/2)$ must be outside the wedge symmetric to the triangle $(0,1)p_1p_2$ at point $(0,1)$ we conclude that $c/2 \not\in [1-d/2,0]$, which is equivalent to $c \not\in [2-d,0]$. Thus, the only remaining value for $c\pmod d$ is $c=1$. 
\end{itemize}

\begin{figure}[h]
\centerline{
\includegraphics[scale=0.55]{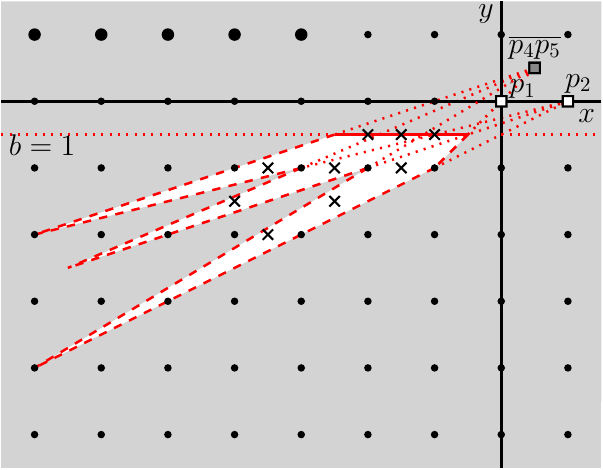}
\includegraphics[scale=0.55]{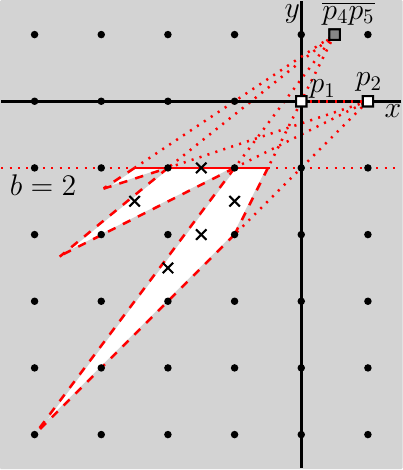}
\includegraphics[scale=0.55]{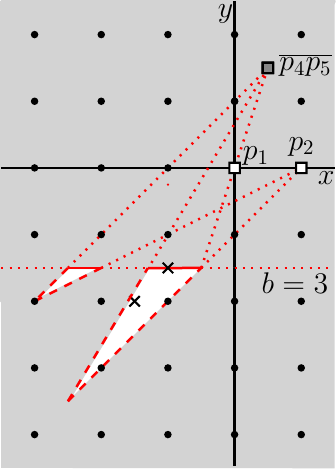}
}
\qquad\qquad $(c,d)=(1,1)$ \hskip 2.5cm $(c,d)=(1,2)$ \hskip 1.3cm $(c,d)=(1,3)$
\caption{The case analysis in the proof of Theorem~\ref{thm:(4,1)distinct-volumes} for  the three possibilities of $(c,d)$. White squares represent the points $p_1$ and $p_2$ of $P$ in the displayed plane $z=0$. The gray square is the intersection of $p_4p_5$ with that same plane. Black dots are the lattice points in the plane and black crosses represent the possible intersection points of the edge $p_3p_5$ and the plane $z=0$. 
}
\label{fig:(4,1)}
\end{figure}

Figure~\ref{fig:(4,1)} shows the possibilities for point $(a,b)$ for the first three cases of $(c,d)$, namely $(c,d)=\{(1,1), (1,2), (1,3)\}$. The figures are read in the same way as Figure~\ref{fig:(3,1)}. Each lattice point in the negative orthant creates an excluded wedge for $(a,b)$. The only novelty is that now we have also an excluded (open) half-plane, the one defined by $d>b$, so that the allowed region (the white region in the pictures) gets smaller and smaller and it becomes lattice-point-free (and eventually empty) for $d\ge 4$ (picture left to the reader).

The $9+5+2$ crosses in the three pictures give a priori  the following $16$ possibilities for the parameters $(a,b,c,d)$:

\medskip

\newcommand{\yes}{\quad \footnotesize{\color{blue} \checkmark}}
\newcommand{\no}{\quad \footnotesize{\color{red} \text{\sffamily X}}}

{\small
\centerline{
\begin{tabular}{|cccc|cc|c|c|}
  \hline
  $a$    & $b$    & $c$    &  $d$  & $p$    &$q$     & \tiny $\gcd(p,q)$ & \tiny $-pb-qa$\\
  \hline
{\bf -2}&{\bf 1} &{\bf 1}&{\bf 1}& {\bf 3}&{\bf 2} &1\yes &1\yes\\
-3        &1        &1        &1        & 4       &2        & 2\no&2\yes\\
-4        &1        &1        &1        & 5       &2        &1\yes& 3\no\\
{\bf -3}&{\bf 2} &{\bf 1}&{\bf 1}& {\bf 4}&{\bf 3}&1\yes&1\yes\\
-5        &2        &1        &1        & 6       &3        &3\no&3\yes\\
-7        &2        &1        &1        & 8       &3        &1\yes&5\no\\
-5        &3        &1        &1        & 6       &4        &2\no&2\yes\\
-8        &3        &1        &1        & 9       &4        &1\yes&5\no\\
\hline
\end{tabular}
\;
\begin{tabular}{|cccc|cc|c|c|}
  \hline  
$a$     &$b$     &$c$    &$d$     & $p$     &$q$    &  \tiny $\gcd(p,q)$ & \tiny $-pb-qa$\\
  \hline    
{\bf -7}&{\bf 4}&{\bf 1}&{\bf 1}& {\bf 8}&{\bf 5}&1\yes&3\yes\\
-3        &2       &1       &2         & 4        &4        &4\no&4\yes\\
{\bf -2}&{\bf 3}&{\bf 1}&{\bf 2} & {\bf 3}&{\bf 5} &1\yes&1\yes\\
-5        &3       &1       &2         & 6        &5        &1\yes&7\no\\
-3        &4       &1       &2         & 4        &6        &2\no&2\yes\\
{\bf -4}&{\bf 5}&{\bf 1}&{\bf 2} & {\bf 5}&{\bf 7}&1\yes&3\yes\\
-2        &3       &1       &3         & 3        &6        &3\no&3\yes\\
{\bf -3}&{\bf 4}&{\bf 1}&{\bf 3} & {\bf 4}&{\bf 7}&1\yes&5\yes\\
\hline
\end{tabular}
}}

\medskip
\normalsize

These $16$ possibilities reduce to only six by excluding those with $\gcd(p,q)\ne1$ (which produce extra lattice points at $z=1$) or $-pb-qa \not\in (0,q]$ (which violate Equation~\eqref{eqs:4_1}). These six are distinguished in boldface in the table above, and give configurations with the following possible pairs for $p_4$ and $p_5$, and their corresponding volume vectors:
\medskip

\centerline{
\begin{tabular}{|c|c|c|}
  \hline
  $p_4=(p,q,1)$ & $p_5=(a,-b,-1)$  & $((a-2)q+bp,\  -pb-qa,\ q-b,\ b,\ q)$\\
  \hline
   $(3,2,1)$ & $(-2,-1,-1)$ & $( -5 , 1 , 1 , 1 , 2 )$\\
  $(4,3,1)$ & $(-3,-2,-1)$ &  $( -7 , 1 , 1 , 2 , 3 )$\\
  $(8,5,1)$ & $(-7,-4,-1)$ &  $( -13 , 3 , 1 , 4 , 5 )$\\
  $(3,5,1)$ & $(-2,-3,-1)$ &  $( -11 , 1 , 2 , 3 , 5 )$\\
  $(5,7,1)$ & $(-4,-5,-1)$ &  $( -17 , 3 , 2 , 5 , 7 )$\\
  $(4,7,1)$ &  $(-3,-4,-1)$ &  $( -19 , 5 , 3 , 4 , 7 )$\\
\hline
\end{tabular}
}
\medskip

Since the volume vectors are all primitive, they completely characterize the configurations (Proposition \ref{prop:VolumeVectors}). The representatives in the statement have been chosen to have smaller coordinates.
\end{proof}

\subsection{Symmetric configurations of signature \texorpdfstring{$(4,1)$}{(4,1)}}
\label{sec:4-1-symmetric}

We finally need to deal with configurations of volume vector $(-4q,q,q,q,q)$.

\begin{theorem}
\label{thm:vvvv-4v}
Every polytope $P$ of size five and signature $(4,1)$ with a symmetric volume vector $(-4q,q,q,q,q)$, is $\Z$-equivalent to the one whose lattice points are $(0,0,0)$, $(1,0,0)$, $(0,0,1)$ together with one of the following two pairs:
\begin{itemize}
\item $(1,1,1)$ and $(-2,-1,-2)$, volume vector $(-4,1,1,1,1)$.
\item $(2,5,1)$ and $(-3,-5,-2)$, volume vector $(-20,5,5,5,5)$.
\end{itemize}
\end{theorem}
Notice that both configurations have width two, with respect to $f(x,y,z)=x-z$.%, which takes values 1,1,0,-1,-1  (not in this order, but with these multiplicities).

\begin{proof}
Theorem~\ref{thm:width2}, using that $\{p_1,\dots,p_4\}$ form an empty tetrahedron and that the volume vector is $(-4q, q,q,q,q)$, allows us to take the following coordinates for the lattice points of $P$:
\[
p_1= (0,0,0), \ 
p_2= (1,0,0), \ 
p_3= (0,0,1), \ 
p_4= (p,q,1), \ 
p_5=(-p-1,-q,-2),
\]
for some $1 \le p \le q$ with $\gcd(p,q)=1$.

The convex hull of $P$ consists of four thetrahedra glued together, all of normalized volume $q$; the one we started with and the following three:
\begin{itemize}
\item $T_1=\{(0,0,0),(1,0,0),(0,0,1), (-p-1,-q,-2)\}$,
\item $T_2=\{(0,0,0),(1,0,0),(p,q,1), (-p-1,-q,-2)\}$, and
\item $T_3=\{(0,0,0),(0,0,1),(p,q,1), (-p-1,-q,-2)\}$.
\end{itemize}
We need to check what values of $p$ and $q$ make these three tetrahedra empty. If $q=1=p$, then all tetrahedra are unimodular and therefore empty. This corresponds to the first configuration in the statement. Assume $q>1$ for the rest of the proof.
 
Lemma~\ref{lemma:empty(a,b,q)} says that in order for the tetrahedron $T_1=\conv\{p_1,p_2,p_3,p_5\}$  to be empty we need one of the following conditions:

\begin{itemize}
\item $p = q-2$ and $\gcd(2,q)=1$. 
\item $-2 \equiv 1 \pmod q$ and $\gcd(p+1,q)=1$. That is, $q=3$ and $p=1$. This is a particular case of the one above.
\item $p = q-3$ and $\gcd(q-2,q)=1$. 
\end{itemize}
That is, $q$ is odd and $p\in\{q-2,q-3\}$.

In order for the tetrahedron $T_2=\conv\{p_1,p_2,p_4,p_5\}$ to be empty, the same Lemma~\ref{lemma:empty(a,b,q)} says that we need one of the following conditions:

\begin{itemize}
\item $p = 2$ and $\gcd(2,q)=1$. 
\item $-2 \equiv 1 \pmod q$ and $\gcd(p-1,q)=1$. That is, $q=3$ and $p=2$. This is a particular case of the one above.
\item $p = 3$ and $\gcd(q-2,q)=1$.  
\end{itemize}
That is, $q$ is odd and $p=2$ or $p=3$.
\medskip

This implies $q=5$ and $p \in \{2,3\}$, which makes $T_3$ have width 1 as well: 
$\{(0,0,0),$ $(0,0,1),$ $(2,5,1), (-3,-5,-2)\}$ has width one with respect to $y-2x$, and
$\{(0,0,0),(0,0,1),$ $(3,5,1),$ $(-4,-5,-2)\}$ has width one with respect to $y+z-2x$.

A priori, this could lead to two different configurations with $q=5$. Not surprisingly, the following matrix represents a $\Z$-equivalence mapping the configuration with $p=2$ to the one with $p=3$:
\[
\left( \begin{array}{ccc}
	1	&-1	&3\\      
	0	&-1	&5\\
	0	&0	&1
\end{array}
\right)
\]
\end{proof}

%!TEX root =articulo5.tex

\section{Towards a classification of all lattice $3$-polytopes}
\label{sec:finiteness}

For every $d\ge 3$ and for every $n\ge d+1$ it is easy to construct infinitely many classes of lattice $d$-polytopes of size $n$ \cite[Theorem 4]{LiuZong}. It is known, however, that this cannot happen if we look only at polytopes with interior lattice points:

\begin{theorem*}[Hensley~\protect{\cite[Thm.~3.6]{Hensley}}]
%\label{thm:Hensley}
For each $k\ge 1$ there is a number $V(k,d)$ such that no lattice $d$-polytope with $k$ interior lattice points has volume above $V(k,d)$.
\end{theorem*}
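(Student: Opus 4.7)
My plan is to establish the contrapositive: if $\vol(P)$ exceeds a threshold $V(k,d)$, then $P$ must contain more than $k$ interior lattice points. The main tool would be a symmetrization around an interior lattice point followed by a Minkowski / Blichfeldt-style count.

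First, I would pick an interior lattice point $p \in P$ (one exists since $k \ge 1$) and set $Q := P - p$, so that $0 \in \operatorname{int}(Q)$. A classical convex-geometric inclusion gives $-Q \subseteq d\cdot Q$ (at least when $p$ is the centroid of $Q$), so the centrally symmetric body $C := Q \cap (-Q)$ satisfies $\vol(C) \ge c(d)\,\vol(P)$ for an explicit constant $c(d)>0$. Then I would invoke a Minkowski-type theorem: a centrally symmetric convex body $C$ contains at least of order $\vol(C)/2^d$ interior lattice points. Each such lattice point $\ell \in \operatorname{int}(C) \subseteq \operatorname{int}(Q)$ yields a distinct interior lattice point $p+\ell$ of $P$. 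Combining the two bounds, if $\vol(P)$ exceeds a threshold $V(k,d)$ which grows linearly in $k$ with a constant depending only on $d$, we would produce strictly more than $k$ interior lattice points of $P$, contradicting the hypothesis.

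The main obstacle will be making the symmetrization step precise: the inclusion $-Q \subseteq d\cdot Q$ holds cleanly only when the origin coincides with the centroid of $Q$, and an arbitrary interior lattice point $p$ need not lie at the centroid. To handle this one could either argue via a canonical ``weighted centroid'' of $P$, average over interior lattice points, or replace the sharp inclusion with a slightly weaker bound that still forces enough interior lattice points into $C$. A possibly cleaner alternative is to first triangulate $P$ into lattice simplices, reduce to the case of a single lattice simplex $S$ with at least one interior lattice point (since one simplex of the triangulation inherits a controlled fraction of the volume), and then use barycentric coordinates: lattice points in $S$ correspond to integer vectors $(a_0,\dots,a_d)$ with $\sum a_i = \vol(S)$, and a pigeonhole argument on the fractional parts of $j\lambda_i$ for $j=1,\dots,\vol(S)$, where $(\lambda_0,\dots,\lambda_d)$ are the barycentric coordinates of an interior lattice point, forces many interior lattice points once $\vol(S)$ is large.
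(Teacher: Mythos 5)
The paper does not prove this statement at all: it is imported verbatim from Hensley \cite{Hensley} and used as a black box in the proof of Corollary~\ref{coro:finitewidth>1}, so there is no in-paper argument to compare against; your proposal has to stand on its own. It does not, because the symmetrization step contains a genuine gap. The inequality $\vol\bigl(Q\cap(-Q)\bigr)\ge c(d)\,\vol(P)$ is false when $0$ is an arbitrary interior \emph{lattice} point rather than the centroid, and no constant $c(d)$ can repair it. Already in dimension $2$, take $Q_m=\conv\{(-1,-1),(-1,1),(2m+1,-1)\}$: the origin is an interior lattice point and $\vol(Q_m)=4m+4\to\infty$, but $Q_m\cap(-Q_m)\subseteq[-1,1]^2$ has normalized volume at most $8$, so the Minkowski count on $C$ detects only $O(1)$ of the $m$ interior lattice points. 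Your proposed fixes do not close this: averaging over interior lattice points fails because for $k=1$ there is nothing to average (e.g.\ the simplices $\conv\{0,s_1e_1,\dots,s_de_d\}$ with $\sum 1/s_i<1$ and $s_i$ taken from Sylvester's sequence have a \emph{unique} interior lattice point sitting at barycentric distance $1/s_d$ from a facet), and any ``slightly weaker bound'' guaranteeing that some interior lattice point is deep enough for $Q\cap(-Q)$ to capture a definite fraction of the volume is essentially equivalent to the theorem one is trying to prove. The argument is circular at exactly the point where the theorem is hard.

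Your alternative sketch is much closer to how the result is actually proved (by Hensley, and in sharpened form by Lagarias--Ziegler \cite{LagariasZiegler}), but the two steps you wave at are precisely the difficult ones. First, the reduction to a simplex is not achieved by triangulating: the number of simplices in a lattice triangulation of $P$ is not bounded a priori, and the large-volume simplex you extract (say the maximal-volume lattice simplex $S\subseteq P$, which does satisfy $\vol(S)\ge \vol(P)/d^d$) need not contain any interior lattice point, so one must argue separately that an interior point of $P$ can be fed into a large simplex. Second, the pigeonhole on the fractional parts $\{j\lambda_i\}$ does not directly produce interior lattice points: the point with barycentric coordinates $(\{j\lambda_0\},\dots,\{j\lambda_d\})$ lies in $S$ only when $\sum_i\{j\lambda_i\}=1$ (the sum can be any integer up to $d$) and is interior only when all entries are strictly positive. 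Making this work requires bounding the smallest barycentric coordinate of an interior lattice point from below in terms of $d$ and $k$, via a Diophantine analysis of the inequality $\sum_i 1/x_i<1$; this is the actual content of Hensley's paper and is where the (doubly exponential in $d$) bounds come from. So the statement is true, but the proposal as written does not prove it.
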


\begin{theorem*}[Lagarias-Ziegler~\protect{\cite[Thm.~2]{LagariasZiegler}}]
%\label{thm:LagariasZiegler}
For each $V\in \N$ there is only a finite number of $\Z$-equivalence classes of $d$-polytopes with volume $V$ or less.
\end{theorem*}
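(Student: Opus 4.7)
The plan is to show that every lattice $d$-polytope $P$ with $\vol(P) \le V$ is $\Z$-equivalent to one contained in a box $[-N,N]^d$ for some $N=N(V,d)$ depending only on $V$ and $d$. Once this is done, the theorem follows immediately: the box contains finitely many lattice points, hence finitely many lattice subsets, hence finitely many $\Z$-equivalence classes of lattice polytopes fitting inside it.

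First, select $d+1$ affinely independent lattice vertices $v_0,\dots,v_d$ of $P$ (possible since $P$ is full-dimensional). The simplex $\Delta_0 := \conv\{v_0,\dots,v_d\}$ is a lattice subpolytope of $P$, so its normalized volume $V_0$ is a positive integer with $1\le V_0\le V$. After applying a $\Z$-equivalence, translate $v_0$ to the origin and bring the matrix $M$ whose columns are $v_1,\dots,v_d$ to Hermite normal form. Then $M$ is upper triangular with positive integer diagonal entries whose product equals $V_0$, and with off-diagonal entries bounded in absolute value by the corresponding diagonal entries. In particular, every entry of $M$ is at most $V_0\le V$.

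Next, bound every lattice point $w\in P$. Write $w=\sum_{j=1}^d c_j v_j$ in the basis $v_1,\dots,v_d$, with $c_j\in\mathbb{Q}$. By multilinearity of the determinant, the normalized volume of $\conv\{v_0,v_1,\dots,\hat{v_j},\dots,v_d,w\}$ equals $|c_j|\cdot V_0$. Since this convex hull lies inside $P$ (its vertices are lattice points of $P$), its volume is at most $V$, giving $|c_j|\le V/V_0\le V$. Combining $|c_j|\le V$ with the entrywise bound $V$ on $M$, every coordinate of $w=Mc$ is at most $dV^2$ in absolute value. Thus every lattice point of $P$, and in particular every vertex, fits inside $[-dV^2,dV^2]^d$, as required.

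The main potential obstacle is minor bookkeeping: the simplex $\conv\{v_0,\dots,\hat{v_j},\dots,v_d,w\}$ may degenerate when $c_j=0$, but then $|c_j|\cdot V_0=0\le V$ and the desired inequality holds trivially. Beyond this and the standard existence of Hermite normal form over $\Z$, no deeper lattice-geometric input is needed; one could alternatively bound the number of vertices first (since any triangulation of $P$ without new vertices uses at most $V$ unit-volume simplices), but the argument above handles all lattice points uniformly and does not require that preliminary step.
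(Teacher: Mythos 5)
Your proof is correct. Note, however, that the paper does not prove this statement at all: it is quoted verbatim as an external result (Lagarias--Ziegler, Theorem 2 of their paper) and used as a black box in the proof of Corollary~2.1, so there is no internal proof to compare against. Your argument is essentially the classical one, and all the steps check out: the normalized volume of the chosen spanning simplex is a positive integer $V_0\le V$ by monotonicity of volume; a row-style Hermite normal form reduction realizes the required $\Z$-equivalence and bounds every entry of $M$ by $\max_j h_{jj}\le V_0$ (since the diagonal entries are positive integers with product $V_0$); the multilinearity computation $\vol\bigl(\conv\{v_0,\dots,\hat{v_j},\dots,v_d,w\}\bigr)=|c_j|V_0$ is right and correctly handles the degenerate case $c_j=0$; and the containment of that simplex in $P$ gives $|c_j|\le V/V_0$. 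In fact you give away a factor of $V_0$: combining $|M_{ij}|\le V_0$ with $|c_j|\le V/V_0$ yields the sharper box $[-dV,dV]^d$, though $dV^2$ is of course sufficient for finiteness. The only cosmetic imprecision is the phrase ``off-diagonal entries bounded by the corresponding diagonal entries,'' which depends on the HNF convention (reduction is modulo the pivot of the relevant column for row-HNF), but either convention gives the bound you actually use.
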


Lattice polytopes with no lattice points in their interior are called \emph{hollow}. For hollow polytopes, although infinitely many for each size, we still have:

\begin{theorem*}[Nill-Ziegler~\protect{\cite[Thm.~1.2]{NillZiegler}}]
%\label{thm:NillZiegler}
There is only a finite number of hollow  $d$-polytopes that do not admit a lattice projection onto a hollow  $(d - 1)$-polytope.
\end{theorem*}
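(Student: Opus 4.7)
The plan is to reduce the statement to the Hensley and Lagarias-Ziegler theorems cited above via Khinchine's classical \emph{flatness theorem}: there is a constant $w(d)$ such that every hollow lattice $d$-polytope has lattice width at most $w(d)$.

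So let $P$ be a hollow $d$-polytope admitting no lattice projection onto a hollow $(d-1)$-polytope. By flatness and a $\Z$-equivalence I may assume $P \subset \R^{d-1} \times [0, w(d)]$, with the last coordinate realizing the minimum width. Writing $\pi$ for the projection forgetting the last coordinate, set $Q := \pi(P)$; by hypothesis $Q$ is a non-hollow lattice $(d-1)$-polytope, so it has at least one interior lattice point. Since $P$ sits in the slab of height $w(d)$ over $Q$, comparing Euclidean volumes gives $\vol(P) \le d \cdot w(d) \cdot \vol(Q)$ in normalized volume, and Lagarias-Ziegler will finish once $\vol(Q)$ is bounded in terms of $d$ alone.

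Bounding $\vol(Q)$ is the heart of the matter. Hensley gives $\vol(Q) \le V(k, d-1)$ where $k$ is the number of interior lattice points of $Q$, so $k$ must be controlled. The key extra input is the \emph{thin-fiber} consequence of hollowness: for every interior lattice point $q$ of $Q$ the fiber $\pi^{-1}(q) \cap P$ is a vertical segment whose relative interior contains no integer (otherwise that integer would be an interior lattice point of $P$), hence has height at most $1$ and contributes at most two lattice points to $P$. Combined with the bounded width of $P$, this very restrictive structure over interior lattice points of $Q$ should bound $k$, either by a combinatorial count of lattice points of $P$ lying over interior vs.\ boundary lattice points of $Q$, or by an induction on $d$ applied to an appropriate piece of $Q$. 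I expect this step---turning the thin-fiber condition into a genuine bound on $k$---to be the main technical obstacle, with flatness, Hensley, and Lagarias-Ziegler serving as black-box inputs.
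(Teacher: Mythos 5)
First, a point of reference: the paper does not prove this statement. It is quoted as an external theorem of Nill--Ziegler and used as a black box in the proof of Corollary~\ref{coro:finitewidth>1}, so there is no internal proof to compare yours against; your proposal has to stand on its own.

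Judged on its own terms it has a genuine gap, and you have located it yourself: everything reduces to bounding the number $k$ of interior lattice points of $Q=\pi(P)$, and for that step you offer only the thin-fiber observation plus the hope that it ``should'' suffice. The reductions you do carry out are sound --- flatness bounds the slab height, the fiber over an interior lattice point of $Q$ is a nondegenerate vertical segment whose relative interior lies in the interior of $P$ and hence meets no lattice point, and $\vol(P)\le d\cdot w(d)\cdot \vol(Q)$ --- and they match the opening moves of the published argument. But the thin-fiber condition by itself does not bound $k$: nothing you have said prevents $Q$ from having arbitrarily many interior lattice points, each with a fiber of height at most one, while $P$ remains hollow. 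Ruling this out is essentially the entire content of the theorem; Hensley's theorem is of no use here because $V(k,d-1)$ grows with $k$, so one needs a different quantitative input --- in Nill--Ziegler's paper, a volume bound for hollow bodies in terms of the position of a single well-placed lattice point (on or adjacent to one of these short fibers), combined with induction on the dimension. Note also that your argument uses the hypothesis only for the one width-realizing projection, whereas the statement gives you non-hollowness of \emph{every} lattice projection; any completion of the missing step is likely to need more of that strength. As written, the proposal is a plausible plan rather than a proof.
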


%In the last one, a \emph{hollow} polytope is a lattice polytope with no lattice points in its interior.

Combining these three statements with the fact that there is a unique hollow $2$-polytope of width larger than one, we get:

\begin{corollary}
\label{coro:finitewidth>1}
For each $n\ge4$, there are finitely many lattice $3$-polytopes of width greater than one and size $n$.
\end{corollary}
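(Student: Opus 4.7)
My plan is to treat $P$ according to whether it contains an interior lattice point, reducing each case to a bound on $\vol(P)$ depending only on $n$, and then invoking Lagarias-Ziegler for finiteness.

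First, if $P$ has $k\ge 1$ interior lattice points, then $k\le n-4$ because the at-least-four vertices of $P$ are boundary lattice points. Hensley's theorem bounds $\vol(P)$ by $\max_{1\le k\le n-4} V(k,3)$, a constant depending only on $n$, so Lagarias-Ziegler yields only finitely many $\Z$-equivalence classes in this case.

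Second, suppose $P$ is hollow. By Nill--Ziegler, either $P$ lies in a finite global list of exceptions, or $P$ admits a lattice projection $\pi$ onto a hollow lattice $2$-polytope $Q$. Since any witness of the width of $Q$ pulls back through $\pi$ to a functional on $P$ of the same width, the width of $Q$ is at least that of $P$, which is greater than one. By the uniqueness of hollow $2$-polytopes of width $>1$, $Q$ is $\Z$-equivalent to $2T:=\conv\{(0,0),(2,0),(0,2)\}$, the second dilate of a unimodular triangle, whose $6$ lattice points we label $q_1,q_2,q_3$ (the vertices) and $q_4,q_5,q_6$ (the edge midpoints). Writing $P\subset 2T\times\R$, the lattice points of $P$ lie in the six vertical fibers $\pi^{-1}(q_i)$, each meeting $P$ in a (possibly empty) lattice segment $[(q_i,a_i),(q_i,b_i)]$, and $\sum_i(b_i-a_i+1)=n$. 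The fibers over the three vertices of $2T$ are non-empty (each vertex of $2T$ is the $\pi$-image of a vertex of $P$), so after a vertical shearing $\Z$-equivalence preserving $\pi$ we may assume $a_1=a_2=a_3=0$.

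The remaining step is to bound the other heights in terms of $n$. For each midpoint $q_i$ ($i\in\{4,5,6\}$) and each integer $h$ with $(q_i,h)\in P$, the tetrahedron $T_h=\conv\{(q_1,0),(q_2,0),(q_3,0),(q_i,h)\}$ is contained in $P$, and it visibly contains the $6$ lattice points of the base $2T\times\{0\}$ together with the $|h|$ further lattice points on the vertical segment from $(q_i,0)$ to $(q_i,h)$. Therefore $|T_h\cap\Z^3|\ge 6+|h|$, and $T_h\subset P$ forces $|h|\le n-6$. Combined with the trivial bound $0\le b_j\le n-1$ for the three corner fibers, every height of a lattice point of $P$ lies in $[-(n-6),n-6]$, so $P\subset 2T\times[-(n-6),n-6]$ and $\vol(P)=O(n)$. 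Lagarias--Ziegler then concludes. The main obstacle in this plan is the height bound through $T_h$; once that elementary observation is set up, the three cited theorems and the uniqueness statement combine cleanly to prove the corollary.
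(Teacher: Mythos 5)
Your proof has exactly the paper's architecture: non-hollow polytopes via Hensley plus Lagarias--Ziegler, hollow ones via Nill--Ziegler unless they project to a hollow polygon, the width hypothesis forcing that polygon to be $2T=\conv\{(0,0),(2,0),(0,2)\}$, and then a height bound over $2T$. The only place where you supply more detail than the paper (which, after normalizing the corner heights, just asserts $P\subset T\times[1-n,n]$) is the height bound, and that is where there is an error: you cannot normalize $a_1=a_2=a_3=0$. An integral affine map preserving $\pi$ has the form $(x,y,z)\mapsto(x,y,\pm z+\alpha x+\beta y+\gamma)$ with $\alpha,\beta,\gamma\in\Z$, and it changes the three corner heights by $(\gamma,\,2\alpha+\gamma,\,2\beta+\gamma)$, because the vertices of $2T$ differ by vectors in $2\Z^2$; hence the parities of $a_2-a_1$ and $a_3-a_1$ are invariants, and the best one can achieve is $a_1=0$ and $a_2,a_3\in\{0,1\}$ --- which is exactly what the paper's transformation with the floor functions does. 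As written, your tetrahedron $T_h$ therefore need not be contained in $P$, and the count $\#(T_h\cap\Z^3)\ge 6+|h|$ is unjustified.

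The damage is local and repairable. Replace $T_h$ by $\conv\{(q_1,a_1),(q_2,a_2),(q_3,a_3),(q_i,h)\}$ with $a_j\in\{0,1\}$: its base still contributes at least $3$ lattice points of $P$, and the vertical segment inside it joining the base point above $q_i$ (which sits at height $0$, $\tfrac12$ or $1$) to $(q_i,h)$ contributes at least $|h|-1$ further ones, so $|h|\le n-2$; combined with the trivial bound $0\le b_j\le n-1$ on the corner fibers this traps $P$ in $2T\times[-(n-2),\,n-1]$, and Lagarias--Ziegler finishes. (Your closing interval $[-(n-6),n-6]$ also overlooks that the corner fibers were only bounded by $n-1$, but that is cosmetic.) In sum: this is the paper's proof, with one fixable normalization slip occurring precisely in the single step where you are more explicit than the source.
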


\begin{proof}
Once we fix $n$, every lattice $3$-polytope $P$ with $n$ lattice points falls in one of the following (not mutually exclusive) categories:
\begin{enumerate}
\item It is not hollow. In this case Hensley's Theorem gives a bound for its volume. This, in turn, implies finiteness via the Lagarias-Ziegler Theorem.
\item It is hollow, but does not project to a hollow $2$-polytope. These are a finite family, by the Nill-Ziegler Theorem.
\item It is hollow, and it projects to a (hollow) $2$-polytope of width $1$. This implies that $P$ itself has width $1$.
\item It is hollow and it projects to a hollow $2$-polytope of width larger than one. The only such $2$-polytope is the second dilation of a unimodular triangle. 
It is easy to check that only finitely many ($\Z$-equivalence classes of) $3$-polytopes of size $n$ project to it: let $P=\conv \{ p_1,...,p_n\}$ be a $3$-polytope of size $n$ that projects onto $T=\conv \{ (0,0),(2,0),(0,2)\}$.%, with $T \, \cap \; \Z^2=\{ (0,0),(1,0),(2,0),(0,1),(0,2),(1,1)\}$.

We must have at least one point projecting to each vertex of $T$. That is: there are $p_1=(0,0,z_1)$, $p_2=(2,0,z_2)$ and $p_3=(0,2,z_3)$ in $P$. The unimodular transformation 
\[
(x,y,z)\mapsto \left(x,y, z-z_1-x \left\lfloor\frac{z_2-z_1}{2}\right\rfloor-y \left\lfloor\frac{z_3-z_1}{2}\right\rfloor\right)
\]
allows us to assume that $z_1,z_2,z_3\in \{0,1\}$. This implies  that $P\subset T\times [1-n,n]$, so there is a finite number of possibilities for $P$.
\end{enumerate}\end{proof}

\begin{remark}
\label{rmk:upperbound}
One may ask how does the number of $3$-polytopes of width $\ge2$ grows with $n$. The bottleneck to this is the huge bound for the polytopes in case (1) of the previous proof (those of type (2) are a constant number, independent of $n$, and those of type (4) grow polynomially). For those of type (1), the bound is (asymptotically) the same as the number of $3$-polytopes with $n$ interior lattice points.
\end{remark}

\begin{remark}
\label{rmk:higherdimensions}
The following higher dimensional analogue of Corollary~\ref{coro:finitewidth>1} is proven in~\cite{threshold}: for each dimension $d$ there is a constant $w(d)\in \N$ such that for every $n$ there are finitely many $d$-polytopes of size $n$ and width greater than $w(d)$. Corollary~\ref{coro:finitewidth>1} says that $w(3)=1$ and the main result in~\cite{threshold} is that $w(4)=2$.
\end{remark}

Still, it is not clear how to find all the (finitely many) $3$-polytopes of width larger than one for each given size $n$. We sketch here the method that we implement in~\cite{quasiminimal}.

Let $P \subset \R^d$ be a lattice $d$-polytope, which we assume to have width greater than one. 
For each vertex $v$ of $P$, we denote $P^v:=\conv (P\cap \Z^d \setminus \{v\})$. 
Let $\wert(P)$ be the set of all vertices and $ \wert^*(P) \subseteq  \wert(P)$ be the set of vertices of $P$ such that $P^v$ is either $(d-1)$-dimensional or has width one.

\begin{definition}
\label{def:(quasi)minimal}
Let $P$ a lattice $d$-polytope $P$ of width $>1$. We say that $P$ is \emph{minimal} if $\wert^* (P)=\wert (P)$ and
\emph{quasi-minimal} if   $\#\wert^*(P) \ge \#  \wert(P)-1$.
\end{definition}

For example, since all $3$-polytopes of size $4$ have width one,  all $3$-polytopes of size $5$ and width $>1$ are minimal.

Our interest in quasi-minimal $3$-polytopes comes from the following observation: 
If we can classify all quasi-minimal $3$-polytopes of a certain size $n$, then we can easily construct the rest of lattice $3$-polytopes of width $>1$ as the (convex hull of the) union of two smaller polytopes of width larger than one. 

Let us be more specific: if a polytope $P$ of width larger than one is not quasi-minimal, then it contains two proper lattice subpolytopes $P^{v_1}$ and $P^{v_2}$ of width larger than one (and size $n-1$)  with
\[
P\cap \Z^3 = (P^{v_1}\cap \Z^3)\cup (P^{v_2}\cap \Z^3).
\]
Then, one of two things happens:
\begin{itemize}
\item If $P^{v_1}\cap P^{v_2}\cap \Z^3$ is still $3$-dimensional, we can think of $P$ as being obtained by gluing $P^{v_1}$ and $P^{v_2}$, and there are finitely many possible ways of gluing two given polytopes in this fashion: fixing an affine basis in $P^{v_1}$ and its (ordered) image in $P^{v_2}$ fixes the gluing.
That is, we can enumerate polytopes $P$ of this type by gluing smaller polytopes, which we assume recursively classified.

\item If $Q=\conv(P_1\cap P_2\cap \Z^3)$ is not three-dimensional, then the number of ways of gluing is more difficult to control, but the possibilities for $Q$ are easy to study. In fact, we show in~\cite{quasiminimal} that this situation implies $Q$ to have size at most four and hence $P$ to have size at most six.
\end{itemize}

Thus, the classification of all lattice $3$-polytopes reduces to that of quasi-minimal $3$-polytopes, plus an implementation of the gluing algorithm.
One problem with this approach is that there are infinitely many quasi-minimal $3$-polytopes:
\begin{proposition}
\label{prop:inf_minimal}
There exist infinitely many minimal (and hence quasi-minimal) $3$-polytopes.
\end{proposition}

\begin{proof}
For every $k\ge 2$, the polytope 
\[
P_k:=\conv\{(1,0,0),(-1,0,0),(0,-1,k),(0,1,k)\}
\]
 has $k-1$ interior points, volume $4k$, and is minimal: 
$P_k^{(1,0,0)}$ and $P_k^{(-1,0,0)}$ have width $1$ with respect to $x$; 
$P_k^{(0,1,k)}$ and $P_k^{(0,-1,k)}$ have width $1$ with respect to $y$.
%
%Adding a vertex we obtain a polytope $Q_k:=P_k \cup \{(0,0,k+1)\}$ with $k$ interior points, volume $4k+4$ and that is quasi-minimal but not minimal: $Q_k^v$ has width one with respect to the same functional as $P_k^v$, except that now the width of $Q_k^{(0,0,k+1)}$ is two (for example with respect to $x$).
\end{proof}

In dimension one the only quasi-minimal polytope is a segment of length two (which is in fact minimal).
In dimension two, there are the following quasi-minimal polygons (see \protect{\cite[Lemma~2.8]{quasiminimal}}). The four in the top row are minimal, and in the others a white dot indicates the vertex not in $\wert^*(P)$.

\medskip
\centerline{
\includegraphics[scale=0.55]{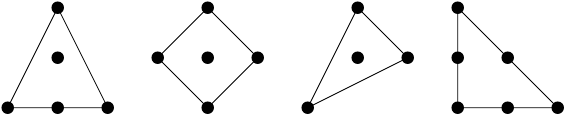}
}
\bigskip
\centerline{
\includegraphics[scale=0.55]{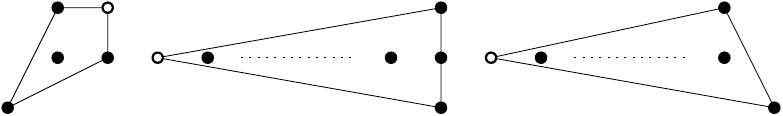}
}

In dimension $3$, the infinite families of quasi-minimal $3$-polytopes can easily be understood, thanks to the following structure theorem. Its proof, together with the complete classification of quasi-minimal $3$-polytopes, can be found in~\cite{quasiminimal}.

\begin{theorem}[\protect{\cite[Theorem~1.3]{quasiminimal}}]
\label{thm:spiked_intro}
Let $P$ be a quasi-minimal lattice $3$-polytope with more than $11$ lattice points. Then $P$ projects to one of the following $2$-polytopes in such a way that all of the vertices in the projection have a unique element in the pre-image. 
\medskip

\centerline{
\includegraphics[scale=0.55]{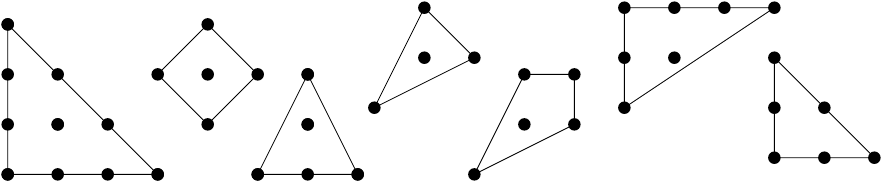}
}
\end{theorem}


\begin{thebibliography}{99}
\bibitem{AverkovWagnerWeismantel}
G.~Averkov, C.~Wagner and R.~Weismantel.
\newblock Maximal lattice-free polyhedra: finiteness and an explicit description in dimension three.
{\tt arXiv:1010.1077}

\bibitem{AverkovKrumpelmannWeltge}
G.~Averkov, J.~Kr\"umplemann and S.~Weltge.
\newblock Notions of maximality for integral lattice-free polyhedra: the case of dimension three.
{\tt arXiv:1509.05200}

\bibitem{Beckwith_etal}
Olivia Beckwith, Matthew Grimm, Jenya Soprunova, Bradley Weaver.
\newblock Minkowski length of 3D lattice polytopes.
\newblock {\em Discrete Comput. Geom.} 48 (2012), 1137--1158.

\bibitem{threshold}
M.~Blanco, C.~Haase, J.~Hoffman, F.~Santos, 
The finiteness threshold width of lattice polytopes,
in preparation.

\bibitem{6points}
M.~Blanco and F.~Santos.
\newblock Lattice 3-polytopes with six lattice points.
\newblock {\em SIAM J. Discrete Math.} 30 (2016), no.~2, 687--717.

\bibitem{quasiminimal}
M.~Blanco and F.~Santos.
\newblock On the enumeration of lattice $3$-polytopes.
\newblock Preprint, January 2016, {\tt arXiv:1601.02577}.

\bibitem{ChoiLamReznick}
M.~D.~Choi, T.~Y.~Lam and B.~Reznick.
\newblock Lattice polytopes with distinct pair-sums.
\newblock {\em Discrete Comput. Geom.} 27 (2002), 65--72.% (arXiv  math.CO/0011068)


\bibitem{deLoeraRambauSantos2010}
J.~A.~De Loera,~J. Rambau,~F. Santos.
\newblock
\emph{Triangulations: Structures for Algorithms and Applications.} Algorithms and Computation in Mathematics, Vol.~25. Springer-Verlag, 2010.

\bibitem{Hensley}
D.~Hensley.
\newblock Lattice vertex polytopes with interior lattice points.
\newblock {\em Pacific J. Math.} 105:1 (1983), 183--191.

\bibitem{Kasprzyk}
A.~Kasprzyk. 
Toric Fano 3-folds with terminal singularities. 
\emph{Tohoku Math. J.} (2) 58:1 (2006), 101--121.

\bibitem{LagariasZiegler}
J.~C.~Lagarias and G.~M.~Ziegler.
\newblock Bounds for lattice polytopes containing a fixed number of interior points in a sublattice.
\newblock {\em Canadian J.~Math.}, 43:5 (1991), 1022--1035.

\bibitem{LiuZong}H.~Liu and C.~Zong,
On the classification of convex lattice polytopes,
\emph{Advances in Geometry}, 14(2) (2014), 239--252. 
DOI: 10.1515/advgeom-2013-0022
%http://arxiv.org/abs/1103.0103

\bibitem{NillZiegler}
B.~Nill and G.~M.~Ziegler.
\newblock Projecting lattice polytopes without interior lattice points.
\newblock {\em Math. Oper. Res.}, 36:3 (2011), 462--467.

\bibitem{Reznick-clean}
B.~Reznick.
\newblock Clean lattice tetrahedra, 
preprint, June 2006, 21~pages, {\tt arXiv:math/0606227}.


\bibitem{SantosZiegler}
F.~Santos, G.~M. Ziegler.
\newblock Unimodular triangulations of dilated 3-polytopes.
\newblock \emph{Trans.~Moscow Math.~Soc.}, 74 (2013), 293--311. DOI: 10.1090/S0077-1554-2014-00220-X 


\bibitem{Scarf}
H.~E.~Scarf.
\newblock Integral polyhedra in three space.
\newblock {\em Math. Oper. Res.} 10:3 (1985), 403--438.

\bibitem{White}
G.~K.~White.
\newblock Lattice tetrahedra.
\newblock {\em Canadian J.~Math.} 16 (1964), 389--396.


\end{thebibliography}
\end{document}